\documentclass[10pt, leqno]{amsart}
\usepackage[latin1]{inputenc}
\usepackage{amsfonts}
\usepackage{amsmath}
\usepackage{amssymb}
\usepackage{amsthm}
\usepackage[T1]{fontenc}

\usepackage{enumerate}
\usepackage{verbatim}
\usepackage{lmodern}

\usepackage{mathtools}

\RequirePackage[dvipsnames,usenames]{color}  
\definecolor{VeryDarkGreen}{rgb}{0,0.18,0.08}
\definecolor{VeryDarkBrown}{rgb}{0.12,0.08,0.04}
\usepackage[pdftex]{hyperref}
\hypersetup{
bookmarks,
bookmarksdepth=2,
bookmarksopen,
bookmarksnumbered,
pdfstartview=FitH,
colorlinks,backref,hyperindex,
linkcolor=VeryDarkBrown,
citecolor=VeryDarkGreen,
urlcolor=VeryDarkGreen
}
\usepackage[all]{xy}
\SelectTips{cm}{10}

\DeclareMathOperator{\Supp}{Supp}

\DeclareMathOperator{\Spec}{Spec}

\newcommand{\eps}{\varepsilon}

\title[Reductions of non-lc ideals]{Reductions of non-lc ideals and non $F$-pure ideals assuming weak ordinarity}

\begin{document}

\swapnumbers
\theoremstyle{plain}
\newtheorem{Le}{Lemma}[section]
\newtheorem{Ko}[Le]{Corollary}
\newtheorem{Theo}[Le]{Theorem}
\newtheorem*{TheoB}{Theorem}
\newtheorem{Prop}[Le]{Proposition}
\newtheorem*{PropB}{Proposition}
\newtheorem{Con}[Le]{Conjecture}
\theoremstyle{definition}
\newtheorem{Def}[Le]{Definition}
\newtheorem*{DefB}{Definition}
\newtheorem{Bem}[Le]{Remark}
\newtheorem{Bsp}[Le]{Example}
\newtheorem*{BspB}{Example}
\newtheorem{Be}[Le]{Observation}
\newtheorem{Sit}[Le]{Situation}
\newtheorem{Que}[Le]{Question}
\newtheorem{Dis}[Le]{Discussion}
\newtheorem{Prob}[Le]{Problem}
\newtheorem{Konv}[Le]{Convention}
\newtheorem{claim}[Le]{Claim}
\swapnumbers
\newtheorem{ConC}{Conjecture}

\author{Axel St\"abler}

\address{Axel St\"abler\\
Johannes Gutenberg-Universit\"at Mainz\\ Fachbereich 08\\
Staudingerweg 9\\
55099 Mainz\\
Germany}

\email{staebler@uni-mainz.de}

\date{\today}

\subjclass[2010]{Primary 13A35; Secondary 14F18, 14B05}

\begin{abstract}
Assume $X$ is a variety over $\mathbb{C}$, $A \subseteq \mathbb{C}$ is a finitely generated $\mathbb{Z}$-algebra and $X_A$ a model of $X$ (i.e. $X_A \times_A \mathbb{C} \cong X$). Assuming the weak ordinarity conjecture we show that there is a dense set $S \subseteq \Spec A$ such that for every closed point $s$ of $S$ the reduction of the maximal non-lc ideal filtration $\mathcal{J}'(X, \Delta, \mathfrak{a}^\lambda)$ coincides with the non-$F$-pure ideal filtration $\sigma(X_s, \Delta_s, \mathfrak{a}_s^\lambda)$ provided that $(X, \Delta)$ is klt or if $(X, \Delta)$ is log canonical, $\mathfrak{a}$ is locally principal and the non-klt locus is contained in $\mathfrak{a}$.
\end{abstract}

\maketitle

\section*{Introduction}
It has been known for several decades that there are close connections between singularities arising from the minimal model program in birational geometry over $\mathbb{C}$ and  so-called $F$-singularities in characteristic $p > 0$ which arose from tight closure theory.
To state this relationship recall the following notion: Given any variety $X$ over a field $k$ of characteristic zero and a finitely generated $\mathbb{Z}$-algebra $A \subseteq k$ we call a variety $Y$ over $\Spec A$ a \emph{model of $X$} if the base change of the generic fiber $Y_0 \times_{\Spec A} \Spec k$ is isomorphic to $X$.

For instance, it is known that Kawamata log terminal singularities (klt singularities for short) relate to $F$-regular singularities in the following way  (\cite{harawatanabe}, \cite{hararationalfrobenius}): A $\mathbb{Q}$-Gorenstein variety $X$ over $\mathbb{C}$ has klt singularities if only if for every model of $X$ there is a dense open set $U$ of $\Spec A$ such that for all closed points $u \in U$ the fiber $X_u$ has $F$-regular singularities.

Several years ago, Musta\c{t}\u{a} and Srinivas (\cite{mustatasrinivasordinary}) and Musta\c{t}\u{a} (\cite{mustataordinary}) have shown that a similar relation between \emph{multiplier ideals}, characteristic zero objects which, in particular, may be used to define the notion of klt singularities, and test ideals, characteristic $p >0 $ objects which can be used to define the notion of $F$-regularity, is true if and only if a certain natural conjecture concerning Frobenius actions on cohomology holds. Let us recall this conjecture:

\begin{ConC}
\label{weakordinarity}
Let $X$ be an $n$-dimensional smooth projective variety over a field $k$ of characteristic zero. Given a model of $X$ over a finitely generated $\mathbb{Z}$-subalgebra $A$ of $k$, there exists a Zariski-dense set of closed points $S \subseteq \Spec A$ such that the action of Frobenius on $H^n(X_s, \mathcal{O}_{X_s})$ is bijective for all $s \in S$.
\end{ConC}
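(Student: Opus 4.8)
Conjecture~\ref{weakordinarity} is the \emph{weak ordinarity conjecture} of Musta\c{t}\u{a} and Srinivas, and it is open in general, so what follows is the expected shape of a proof together with the cases in which it can currently be carried out. The plan is to reformulate bijectivity of the $p$-linear Frobenius on $H^n(X_s,\mathcal{O}_{X_s})$ as a statement about slopes: after spreading $X$ out to a smooth projective morphism $\mathcal{X}\to\Spec A$, a closed point $s$ has finite, hence perfect, residue field of characteristic $p=p(s)$, and by Illusie's theory of the de Rham--Witt complex the Frobenius is bijective on $H^n(X_s,\mathcal{O}_{X_s})$ exactly when $H^n(X_s,W\mathcal{O}_{X_s})$ is a finitely generated $W$-module, equivalently when the slope-$0$ part of the $F$-crystal $H^n_{\mathrm{cris}}(X_s/W)$ attains its maximal possible $W$-rank $h^{0,n}:=\dim_k H^n(X,\mathcal{O}_X)$. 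By Mazur's ``Newton lies above Hodge'' theorem this rank is always $\le h^{0,n}$, so the conjecture says precisely that the Newton and Hodge polygons of $H^n_{\mathrm{cris}}(X_s/W)$ agree along their first segment for a Zariski-dense set of closed points $s$.

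I would then exploit semicontinuity. By the Grothendieck--Katz specialization theorem the Newton polygon of $H^n_{\mathrm{cris}}$ only rises under specialization within a fixed characteristic, so inside each $\Spec(A\otimes_{\mathbb{Z}}\mathbb{F}_p)$ the ``top-degree ordinary locus'' is open; since $\Spec A$ is Jacobson, it then suffices to show that this locus is nonempty for infinitely many $p$. For this I would distinguish cases. If $H^i(X,\mathcal{O}_X)=0$ for all $i>0$ --- for instance when $X$ is rationally chain connected or of Fano type --- the condition is vacuous and one may take $S=\Spec A$. If $\dim X=1$ the statement is the classical density of ordinary primes for the Jacobian. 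For abelian varieties and $K3$ surfaces one can feed in Bogomolov--Zarhin type results producing infinitely many ordinary reductions. In the remaining cases one would want an equidistribution input of generalized Sato--Tate / Mumford--Tate type: the Frobenius conjugacy classes should equidistribute in a compact form of the relevant motivic monodromy group, inside which ``ordinary'' is a nonempty Zariski-open condition and is therefore hit with positive density.

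The main obstacle is exactly this last case. There is no unconditional mechanism that forces ordinary reduction once $H^n(X,\mathcal{O}_X)\neq 0$ and $X$ carries no extra structure; producing such reductions is of the same order of difficulty as higher-dimensional generalizations of Serre's conjecture on the density of ordinary primes and as cases of the generalized Sato--Tate conjecture. This is precisely why the present paper does not prove Conjecture~\ref{weakordinarity} but instead takes it as a hypothesis, following the strategy of Musta\c{t}\u{a}--Srinivas for comparing test ideals with multiplier ideals.
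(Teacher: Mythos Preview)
Your assessment is correct: the paper does not prove Conjecture~\ref{weakordinarity}. It is stated as an open conjecture (the weak ordinarity conjecture of Musta\c{t}\u{a}--Srinivas) and used throughout as a standing hypothesis, most notably in Theorem~\ref{AnotherResult} and Proposition~\ref{ConverseConjecture}. There is therefore no ``paper's own proof'' to compare your proposal against.

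Your discussion of the conjecture's status and the known partial cases is accurate and well-informed, but it is commentary rather than a proof, and you acknowledge this yourself. For the purposes of this paper nothing more is required: the conjecture functions purely as an assumption, and the paper's contribution is to derive consequences from it (and, in Proposition~\ref{ConverseConjecture}, to show that Conjecture~\ref{con2} implies it back).
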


This is usually referred to as the \emph{weak ordinarity conjecture} since it is strictly weaker than $X_s$ being ordinary in the sense of Bloch and Kato (\cite{blochkatopadic}).

More recently, Bhatt, Schwede and Takagi (\cite[Theorem 5.3]{bhattschwedetakagiweakordinarityfsing}) have strengthened the result of Musta\c{t}\u{a} and Srinivas as follows: They show that Conjecture \ref{weakordinarity} above is equivalent to the following

\begin{ConC}
\label{ConTauJGeneral}
Let $X$ be a normal variety over a field $k$ of characteristic zero. Suppose that $\Delta$ is a $\mathbb{Q}$-divisor such that $K_X + \Delta$ is $\mathbb{Q}$-Cartier and $\mathfrak{a}$ is a non zero ideal sheaf in $\mathcal{O}_X$. Given a model of $(X, \Delta, \mathfrak{a})$ over a finitely generated $\mathbb{Z}$-algebra $A \subseteq k$ there exists a dense set of closed points $S \subseteq \Spec A$ such that
\begin{equation}\label{Blubber2}
\tau(X_s, \Delta_s, \mathfrak{a}_s^\lambda) = \mathcal{J}(X, \Delta, \mathfrak{a}^\lambda)_s
\end{equation}
for all $s \in S$ and all $\lambda \geq 0$, where $\tau(X_s, \Delta_s, \mathfrak{a}^\lambda)$ denotes the test ideal associated to $X_s, \Delta_s, \mathfrak{a}_s, \lambda$ and similarly $\mathcal{J}(X, \Delta, \mathfrak{a}^\lambda)$ denotes the multiplier ideal sheaf associated to $X, \Delta, \mathfrak{a}, \lambda$. Furthermore, if we have finitely many triples $(X_i, \Delta_i, \mathfrak{a}_i)$ as above and corresponding models over $\Spec A$, then there is a dense subset of closed points in $\Spec A$ such that (\ref{Blubber2}) holds for all these triples.
\end{ConC}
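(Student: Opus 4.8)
Conjecture~\ref{ConTauJGeneral} is the Bhatt--Schwede--Takagi reformulation \cite{bhattschwedetakagiweakordinarityfsing}, and since Conjecture~\ref{weakordinarity} is available to us, the substance is the implication from the latter to the former; here is the plan I would follow. One inclusion costs nothing: for a fixed model there is a dense \emph{open} $U\subseteq\Spec A$ with $\tau(X_s,\Delta_s,\mathfrak a_s^\lambda)\subseteq\mathcal J(X,\Delta,\mathfrak a^\lambda)_s$ for all $s\in U$ and all $\lambda\geq0$, by the classical reduction mod $p$ comparison: fix a log resolution of $(X,\Delta,\mathfrak a)$ over $k$, spread it out, and use that over the locus where it remains a resolution the test ideal of the fiber is contained in the reduction of the multiplier ideal. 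Since both ideals are defined affine-locally and commute with restriction to opens and with spreading out, I may assume $X=\Spec R$ affine; and by discreteness and rationality of jumping numbers together with a Skoda-type argument, ``all $\lambda\geq0$'' collapses to finitely many rational values. So the heart of the matter is: for a fixed rational $\lambda$, produce a dense set $S$ of closed points with $\mathcal J(X,\Delta,\mathfrak a^\lambda)_s\subseteq\tau(X_s,\Delta_s,\mathfrak a_s^\lambda)$.

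First I would fix a log resolution $\pi\colon Y\to X$ of $(X,\Delta,\mathfrak a)$ over $k$, with $\mathfrak a\mathcal O_Y=\mathcal O_Y(-G)$ and $G+\pi^{-1}_{*}\Delta+\mathrm{Exc}(\pi)$ simple normal crossings, and compactify $Y$ to a smooth projective $\overline Y$ in which the closures of these divisors are still snc. On $Y$ one has $\mathcal J(X,\Delta,\mathfrak a^\lambda)=\pi_{*}\mathcal O_Y\!\big(K_Y-\pi^{*}(K_X+\Delta)-\lfloor\lambda G\rfloor\big)$, so a given local section of $\mathcal J$ is a section of an explicit line bundle on $Y$, and, after twisting by a sufficiently positive divisor supported at infinity, of a line bundle $L$ on $\overline Y$. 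Spread the whole package $(\overline Y\supseteq Y\to X,\ G,\ L,\dots)$ out over $A$. On a fiber, choose $e$ divisible enough that $(p^{e}-1)(K_{X_s}+\Delta_s)$ is Cartier; then there is a canonical trace map $\Phi^{e}\colon F^{e}_{*}\mathcal O_{X_s}\big((1-p^{e})(K_{X_s}+\Delta_s)\big)\to\mathcal O_{X_s}$, and $\tau(X_s,\Delta_s,\mathfrak a_s^\lambda)=\sum_{e\gg0}\Phi^{e}\big(F^{e}_{*}\,\mathfrak a_s^{\lceil\lambda(p^{e}-1)\rceil}\big)$, the sum stabilizing. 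The plan is to exhibit the chosen section of $\mathcal J_s$ inside $\Phi^{e}(\cdots)$ for some large $e$ by constructing, on $\overline Y_s$, a Frobenius-compatible splitting of an explicit map of line bundles built from $L$, the divisor resolving $\mathfrak a_s$, and the iterated Cartier operator on $\overline Y_s$; pushing the splitting down to $X_s$ and pairing it with the section exhibits that section as a test-ideal element.

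The obstruction to such a splitting at level $e$ lives in $H^{>0}$ of a line bundle on $\overline Y_s$; by Serre duality on the smooth projective $\overline Y_s$ this is $H^{<n}$ of another line bundle, and once one arranges --- by the choice of compactification and by Serre vanishing for the positive twist --- that everything outside $H^{n}(\overline Y_s,\mathcal O_{\overline Y_s})$ vanishes in the relevant range, the obstruction is governed exactly by the Frobenius action on $H^{n}(\overline Y_s,\mathcal O_{\overline Y_s})$ (equivalently, by Serre duality, by the Cartier operator on $H^{0}(\overline Y_s,\omega_{\overline Y_s})$), whose non-degeneracy is what lets one absorb the growing $p^{e}$-twist as $e\to\infty$. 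This is precisely where Conjecture~\ref{weakordinarity} enters: applied to the smooth projective variety $\overline Y$ it yields a dense $S\subseteq\Spec A$ over which $F$ is bijective on $H^{n}(\overline Y_s,\mathcal O_{\overline Y_s})$, forcing the relevant maps on global sections to be eventually surjective and producing the splitting. Finitely many blow-ups of $\overline Y$ may be needed en route, and the closing ``finitely many triples'' clause is handled the same way, by a single application of the conjecture to one auxiliary smooth projective variety assembled from all the compactified resolutions at once: take a disjoint union, equalized in dimension by multiplying the lower-dimensional pieces with abelian varieties whose reductions one forces to be ordinary along a dense set, so that bijectivity of $F$ on the top cohomology of the union implies it on each summand.

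The main obstacle is exactly this cohomological bookkeeping. A priori one meets $H^{\bullet}$ of many different line bundles $\mathcal O_{\overline Y_s}(D)$, whereas Conjecture~\ref{weakordinarity} says nothing about twists; overcoming this needs a careful choice of $\overline Y$ and of the auxiliary very ample divisor so that all the twists become Serre-acyclic in the range that matters, leaving $H^{n}(\mathcal O)$ as the single unavoidable group --- and making this uniform in $e$ and stable under the blow-ups and compactifications used to climb from $(X,\Delta,\mathfrak a)$ up to $\overline Y$ is the technical core. The converse implication, that the comparison forces weak ordinarity (shown by testing on cones over smooth projective varieties), is the easier half and is not needed here.
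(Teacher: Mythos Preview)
The paper does not contain a proof of this statement: Conjecture~\ref{ConTauJGeneral} is stated in the introduction as a conjecture and its equivalence with Conjecture~\ref{weakordinarity} is simply \emph{cited} from \cite[Theorem~5.3]{bhattschwedetakagiweakordinarityfsing} (building on \cite{mustatasrinivasordinary}, \cite{mustataordinary}). In the body of the paper the implication ``Conjecture~\ref{weakordinarity} $\Rightarrow$ Conjecture~\ref{ConTauJGeneral}'' is used as a black box (e.g.\ in the proof of Theorem~\ref{AnotherResult}), not reproved. So there is no ``paper's own proof'' to compare against.

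That said, your sketch is a fair high-level summary of the Musta\c{t}\u{a}--Srinivas/Bhatt--Schwede--Takagi strategy: spread out a compactified log resolution, reduce the nontrivial inclusion to the surjectivity of a trace/Cartier-type map on the smooth projective model, and invoke weak ordinarity on $H^{n}(\overline Y_s,\mathcal O_{\overline Y_s})$ to kill the obstruction. Two remarks on accuracy. First, the reduction ``all $\lambda\geq 0$ collapses to finitely many rational values'' is not how the uniformity in $\lambda$ is actually achieved in those papers: jumping numbers of $\tau$ on the fibers vary with $s$, and the point is rather that a single application of weak ordinarity to one fixed auxiliary smooth projective variety controls all $\lambda$ at once (via Skoda plus the passage to a general principal ideal $g=g_1\cdots g_m$ as in \cite{mustatasrinivasordinary}); your phrasing suggests a finite case-check that would not obviously be uniform in $s$. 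Second, the device of equalizing dimensions by multiplying with abelian varieties is a plausible trick but is not how \cite{bhattschwedetakagiweakordinarityfsing} handles the ``finitely many triples'' clause; be careful not to attribute that maneuver to them without checking. These are refinements rather than fatal gaps, but since the paper under review does not prove this statement at all, the right thing here is simply to cite \cite[Theorem~5.3]{bhattschwedetakagiweakordinarityfsing} rather than supply your own argument.
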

We note that the point here is to allow $\lambda$ to be arbitrary. Namely, it is known that for fixed $\lambda$ there is an \emph{open} set of closed points of $\Spec A$ such that (\ref{Blubber2}) holds (cf. \cite[Proposition 2.11]{bhattschwedetakagiweakordinarityfsing} and references given therein).

Another important class of singularities in characteristic zero are so-called \emph{log canonical singularities}. These relate to so-called \emph{$F$-pure singularities} in the following way: If $X$ is a $\mathbb{Q}$-Gorenstein variety over $\mathbb{C}$ with a model $X_A$ such that infinitely many reductions $X_s$ are $F$-pure then $X$ is log canonical (see \cite{harawatanabe}).

The converse to this is open. However, Takagi has shown (\cite[Theorem 2.11]{takagifpurelccorrespondence}) that assuming Conjecture \ref{weakordinarity} if $X$ is $\mathbb{Q}$-Gorenstein and log canonical, then given any model over $\Spec A$ there is a dense set of closed points $S$ such that $X_s$ is $F$-pure for all $s \in S$.

In their paper, Bhatt, Takagi and Schwede raise the following question (\cite[Question 5.6]{bhattschwedetakagiweakordinarityfsing}): Does Conjecture \ref{weakordinarity} imply a similar result concerning the reduction of non-lc ideals $\mathcal{J}'(X, \Delta, \mathfrak{a}^\lambda)$ and non-$F$-pure ideals $\sigma(X_s, \Delta_s, \mathfrak{a}^\lambda_s)$?

The notion of non-lc ideal relates to log canonical singularities roughly in the same way that multiplier ideals relate to klt singularities. The same is true for non-$F$-pure ideals and $F$-pure singularities.

The goal of this note is to answer \cite[Question 5.6]{bhattschwedetakagiweakordinarityfsing} in the following cases:

\begin{TheoB}
Assume that Conjecture \ref{weakordinarity} holds. Let $X$ be a normal projective variety over a field k of characteristic zero, $\Delta$ an effective $\mathbb{Q}$-divisor such that $K_X + \Delta$ is $\mathbb{Q}$-Cartier and $\mathfrak{a}$ a non-zero ideal sheaf in $\mathcal{O}_X$. Assume one of the following conditions holds:
\begin{enumerate}[(a)]
\item{$(X, \Delta)$ is klt.}
\item{$(X, \Delta)$ is log canonical, $\mathfrak{a}$ is locally principal and the non-klt locus of $X$ is contained in $\mathfrak{a}$.}
\end{enumerate}
Then given any model of $(X, \Delta, \mathfrak{a})$ over a finitely generated $\mathbb{Z}$-subalgebra $A$ of $k$ there exists a dense set of closed points $S$ of $\Spec A$ such that for all $\lambda \geq 0$ and all $s \in S$ one has
\[ \sigma(X_s, \Delta_s, \mathfrak{a}_s^\lambda) = \mathcal{J}'(X, \Delta, \mathfrak{a}^\lambda)_s. \]
\end{TheoB}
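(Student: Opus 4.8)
The plan is to deduce the theorem from the comparison $\tau(X_s,\Delta_s,\mathfrak a_s^\lambda)=\mathcal J(X,\Delta,\mathfrak a^\lambda)_s$ of Conjecture \ref{ConTauJGeneral} (which holds under Conjecture \ref{weakordinarity} by \cite[Theorem 5.3]{bhattschwedetakagiweakordinarityfsing}), by realising both the maximal non-lc filtration and the non-$F$-pure filtration as left-hand limits of the multiplier and test ideal filtrations, and then exploiting the fact that, for a good choice of the dense set, the $F$-jumping numbers of the reductions can only occur among the jumping numbers of the characteristic-zero multiplier ideal.

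\emph{Perturbation descriptions.} The first step is to show that under hypothesis (a) or (b), for every $\lambda>0$ there is $\eps_0>0$ with $\mathcal J'(X,\Delta,\mathfrak a^\lambda)=\mathcal J(X,\Delta,\mathfrak a^{\lambda-\eps})$ for all $0<\eps<\eps_0$, and that, whenever $(X_s,\Delta_s)$ is sharply $F$-pure (strongly $F$-regular in case (a)), the analogous identity $\sigma(X_s,\Delta_s,\mathfrak a_s^\lambda)=\tau(X_s,\Delta_s,\mathfrak a_s^{\lambda-\eps})$ holds for $0<\eps\ll1$. For the characteristic-zero identity one passes to a common log resolution $\pi\colon Y\to X$ of $(X,\Delta,\mathfrak a)$, writes $K_Y-\pi^*(K_X+\Delta)=\sum a_iE_i$ and $\mathfrak a\mathcal O_Y=\mathcal O_Y(-\sum f_iE_i)$, and compares, divisor by divisor, the coefficients cutting out $\mathcal J'(X,\Delta,\mathfrak a^\lambda)$ with those cutting out $\mathcal J(X,\Delta,\mathfrak a^{\lambda-\eps})$; these agree for $0<\eps\ll1$ except on certain $\pi$-exceptional divisors, where both are non-negative and hence irrelevant to the pushforward, and in the single case where the coefficient of $E_i$ in $\lambda\sum f_jE_j-\sum a_jE_j$ is a positive integer with $f_i=0$ — but that case forces $a_i\le-1$, i.e.\ $c_X(E_i)$ lies in the non-klt locus of $(X,\Delta)$, which is empty in case (a) and contained in $V(\mathfrak a)$ in case (b), whence $f_i=\ord_{E_i}\mathfrak a>0$, a contradiction. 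The characteristic-$p$ identity is the mirror statement; here the hypothesis that $\mathfrak a$ be locally principal lets one replace $\mathfrak a_s^{\lambda-\eps}$ by the boundary $(\lambda-\eps)\divisor(h_s)$ and invoke the perturbation theory of test ideals for $F$-pure divisor pairs, the hypothesis on the non-klt locus playing exactly the role it plays above.

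\emph{Choice of the dense set and conclusion.} Applying Conjecture \ref{ConTauJGeneral} to $(X,\Delta,\mathfrak a)$ gives a dense set $S\subseteq\Spec A$ of closed points with $\tau(X_s,\Delta_s,\mathfrak a_s^\mu)=\mathcal J(X,\Delta,\mathfrak a^\mu)_s$ for all $\mu\ge0$ and all $s\in S$. Taking $\mu=0$ yields $\tau(X_s,\Delta_s)=\mathcal J(X,\Delta)_s$; in case (a) this is $\mathcal O_{X_s}$, so $(X_s,\Delta_s)$ is strongly $F$-regular for $s\in S$, while in case (b) I would intersect $S$ with the dense set of \cite[Theorem 2.11]{takagifpurelccorrespondence}, keeping sharp $F$-purity of $(X_s,\Delta_s)$, the intersection staying dense because both sets arise from Conjecture \ref{weakordinarity} applied to a fixed finite family of projective varieties. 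Now for $s\in S$ the function $\mu\mapsto\tau(X_s,\Delta_s,\mathfrak a_s^\mu)=\mathcal J(X,\Delta,\mathfrak a^\mu)_s$ changes only where $\mu\mapsto\mathcal J(X,\Delta,\mathfrak a^\mu)$ does, so every $F$-jumping number of $(X_s,\Delta_s,\mathfrak a_s)$ is a jumping number of $\mathcal J(X,\Delta,\mathfrak a^\bullet)$. Hence, if $\lambda>0$ is not such a jumping number, the perturbation descriptions give
\[\sigma(X_s,\Delta_s,\mathfrak a_s^\lambda)=\tau(X_s,\Delta_s,\mathfrak a_s^{\lambda-\eps})=\tau(X_s,\Delta_s,\mathfrak a_s^\lambda)=\mathcal J(X,\Delta,\mathfrak a^\lambda)_s=\mathcal J'(X,\Delta,\mathfrak a^\lambda)_s.\]
Since the non-jumping numbers are dense in $(0,\infty)$ and both $\lambda\mapsto\sigma(X_s,\Delta_s,\mathfrak a_s^\lambda)$ and $\lambda\mapsto\mathcal J'(X,\Delta,\mathfrak a^\lambda)_s$ are decreasing and left-continuous (the latter because $\mathcal J'(X,\Delta,\mathfrak a^\lambda)=\mathcal J(X,\Delta,\mathfrak a^{\lambda-\eps})$ and multiplier ideals, hence their reductions, are right-continuous), the displayed equality propagates to all $\lambda>0$. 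The value $\lambda=0$ is immediate: $\mathcal J'(X,\Delta,\mathfrak a^0)=\mathcal J'(X,\Delta)=\mathcal O_X$ since $(X,\Delta)$ is log canonical, and $\sigma(X_s,\Delta_s)=\mathcal O_{X_s}$ since $(X_s,\Delta_s)$ is sharply $F$-pure for $s\in S$.

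The main obstacle I anticipate is the characteristic-$p$ perturbation identity in case (b): proving $\sigma(X_s,\Delta_s,\mathfrak a_s^\lambda)=\tau(X_s,\Delta_s,\mathfrak a_s^{\lambda-\eps})$ when $(X_s,\Delta_s)$ is only $F$-pure rather than $F$-regular and checking that this is compatible with the reduction process, together with the verification that the dense set of Conjecture \ref{ConTauJGeneral} may be taken simultaneously with that of Takagi's theorem. By contrast the characteristic-zero perturbation identity and the jumping-number observation are comparatively soft; it is the $F$-singularity side in the log canonical case that carries the weight.
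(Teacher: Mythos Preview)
Your overall strategy---reduce to the equality $\tau=\mathcal J_s$ via a perturbation description $\mathcal J'(\lambda)=\mathcal J(\lambda-\eps)$ and $\sigma(\lambda)=\tau(\lambda-\eps)$---is the same as the paper's. The characteristic-zero perturbation you sketch is exactly Proposition~\ref{Maximalnonlcmultklt}, and your handling of $\lambda=0$ and of the compatibility of the two dense sets is fine. For case~(b) your argument is essentially correct, with one caveat: the identity $\sigma(X_s,\Delta_s,f_s^\lambda)=\tau(X_s,\Delta_s,f_s^{\lambda-\eps})$ from Proposition~\ref{SigmaProperties1}(ii) is only available when $\lambda\in\mathbb Z_{(p(s))}$; when $p(s)$ divides the denominator you instead get $\sigma(\lambda)=\tau(\lambda+\eps)$. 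Since you work at a non-$F$-jumping $\lambda$ this is harmless (both limits coincide with $\tau(\lambda)$), but you should say so explicitly rather than asserting $\sigma(\lambda)=\tau(\lambda-\eps)$ unconditionally. The paper organises this slightly differently, treating the two denominator cases separately and shifting by a small $\delta$ in the second; your ``non-jumping $\lambda$ plus left-continuity'' packaging is a legitimate alternative.

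The genuine gap is in case~(a). You assert that if $(X_s,\Delta_s)$ is strongly $F$-regular then $\sigma(X_s,\Delta_s,\mathfrak a_s^\lambda)=\tau(X_s,\Delta_s,\mathfrak a_s^{\lambda-\eps})$ for an \emph{arbitrary} ideal $\mathfrak a$, and you also invoke left-continuity of $\lambda\mapsto\sigma(X_s,\Delta_s,\mathfrak a_s^\lambda)$. Neither is supplied by the paper's toolkit: Proposition~\ref{SigmaProperties1}(i)--(iii) are all stated for a \emph{principal} ideal $(f)$ and under the hypothesis that the non-$F$-regular locus lies in $(f)$. There is no ``mirror statement'' available for non-principal $\mathfrak a$, and your sketch does not indicate how one would prove it. This is precisely why the paper does \emph{not} argue case~(a) directly: it first reduces the klt case to a principal ideal by a Skoda-type argument (Corollary~\ref{nonlcproperties}, Proposition~\ref{SigmaProperties1}(iv)) combined with the general-linear-combination trick $g=g_1\cdots g_m$ and $\mathcal J(X,\Delta,\mathfrak a^{\lambda-\eps})=\mathcal J(X,\Delta,g^{(\lambda-\eps)/m})$, after which case~(a) becomes a special instance of case~(b). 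Without this reduction your argument in case~(a) does not close. You correctly flag the characteristic-$p$ perturbation as the hard point, but you locate the difficulty in case~(b) rather than in the non-principal situation of case~(a), which is where it actually lies.
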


In fact, for part (b) the condition that $(X, \Delta)$ is log canonical is only needed to ensure equality in the case $\lambda = 0$. If we only care about $\lambda > 0$ then we may drop this extra assumption.

Let us now give a brief summary of the contents of this paper.
In Section \ref{NonFpureRecap} we explain the notion of non-$F$-pure ideal and recall several technical results from \cite{staeblerassgradecartier} that we shall need. In Section \ref{maximalnonlc} we recall the notion on maximal non-lc ideals, and prove the above theorem after some preliminary work.

\subsection*{Acknowledgements} I thank Manuel Blickle, Manfred Lehn, Mircea Musta\c{t}\u{a} and Karl Schwede for useful discussions. I am also indebted to a referee for several useful suggestions. The author acknowledges support by grant STA 1478/1-1 and by SFB/Transregio 45 Bonn-Essen-Mainz of the Deutsche Forschungsgemeinschaft (DFG).

\section{A review of non-$F$-pure ideals}
\label{NonFpureRecap}
In this section we briefly review the theory of non-$F$-pure ideals (as developed in \cite[Section 4]{staeblerassgradecartier}, building on \cite{fujinotakagischwedenonlc}) and recall the definition of test ideal.

We say that a $\mathbb{Q}$-Cartier $\mathbb{Q}$-divisor $A$ has index $n$ if $nA$ is a Cartier-divisor.
Fix a normal domain $R$ of finite type over an $F$-finite field. Assume we are given an effective $\mathbb{Q}$-divisor $\Delta$ such that $K_{\Spec R} + \Delta$ is $\mathbb{Q}$-Cartier with index not divisible by $p$. If $(p^{e_0} -1)(K_{\Spec R} + \Delta)$ is Cartier, then there is a canonically associated map (up to multiplication by a unit) $\varphi_{\Delta}\colon F_\ast^{e_0} R((1-p^{e_0})(K_{\Spec R} + \Delta)) \to R$ (see \cite[Section 3]{schwedefadjunction} for details).

\begin{Def}
Let $R$ be a normal domain of finite type over an $F$-finite field, $\mathfrak{a}\subseteq R$ an ideal. Let $\Delta$ be a $\mathbb{Q}$-divisor such that $K_{\Spec R} + \Delta$ is $\mathbb{Q}$-Cartier with index not divisible by $p$. Fix a rational number $\lambda \geq 0$.
\begin{enumerate}[(a)]
\item{We define the \emph{test ideal} \[\tau(R, \Delta, \mathfrak{a}^\lambda)\] with respect to $\Delta, \mathfrak{a}$ and $\lambda$ as the smallest non-zero ideal $J$ such that \[\varphi_\Delta^{e}(F_\ast^{e\cdot e_0} \mathfrak{a}^{\lceil\lambda(p^{e\cdot e_0} -1) \rceil} J) \subseteq J\] for all $e \geq 1$}
\item{We define the \emph{non-$F$-pure ideal} \[\sigma(R, \Delta, \mathfrak{a}^\lambda)\] with respect to $\Delta, \mathfrak{a}$ and $\lambda$ as follows.
Write \[ \sigma_1 = \sum_{e \geq 1} \varphi_\Delta^e(F_\ast^{e\cdot e_0} \mathfrak{a}^{\lceil \lambda (p^{e\cdot e_0} -1) \rceil} ).\] Then inductively define \[\sigma_n = \sum_{e \geq 1} \varphi_{\Delta}^e(F_\ast^{e \cdot e_0}  \mathfrak{a}^{\lceil \lambda (p^{e\cdot e_0}-1)\rceil} \sigma_{n-1}).\] By a result of Blickle (\cite[Proposition 2.13]{blicklep-etestideale}) this descending chain stabilizes. Its stable member is by definition $\sigma(R, \Delta, \mathfrak{a}^\lambda)$.
}
\end{enumerate}
\end{Def}

We note that in \cite{fujinotakagischwedenonlc} the $\sigma_n$ are defined by taking the integral closure of the $\mathfrak{a}^{\lceil \lambda (p^e -1)\rceil}$. Denoting this variant by $\bar{\sigma}(R, \Delta, \mathfrak{a}^\lambda)$ one has an inclusion $\sigma(R, \Delta, \mathfrak{a}^\lambda) \subseteq \bar{\sigma}(R, \Delta, \mathfrak{a}^\lambda)$ which is an equality in the case $\mathfrak{a} = (f)$ since $R$ is normal. For our purposes it doesn't matter with which variant we work so we choose the simpler one (cf.\ also the discussion before Question \ref{q1}).

These constructions commute with localization so that, given a normal scheme $X$ of finite type over an $F$-finite field, a $\mathbb{Q}$-divisor $\Delta$ such that $K_X + \Delta$ is $\mathbb{Q}$-Cartier with index not divisible by $p$ and an ideal sheaf $\mathfrak{a}$ we may glue to obtain $\tau(X, \Delta, \mathfrak{a}^{\lambda})$ and $\sigma(X, \Delta, \mathfrak{a}^\lambda)$ for any $\lambda \geq 0$.

In the following we summarize the results on non-$F$-pure ideals that we shall need in the sequel.

\begin{Prop}
\label{SigmaProperties1}
Let $R$ be normal domain of finite type over an $F$-finite field, and $0 \neq f \in R$. Fix a rational number $\lambda > 0$ and let $\Delta$ be an effective $\mathbb{Q}$-divisor on $\Spec R$ such that $K_{\Spec R} + \Delta$ is $\mathbb{Q}$-Cartier with index not divisible by $p$
\begin{enumerate}[(i)]
\item{The filtration $\sigma(R, \Delta, f^\mu)_{\mu \geq 0}$ is discrete and non-increasing.}
\item{Assume that the non-$F$-regular locus of $R$ is contained in $(f)$. If $\lambda \in \mathbb{Z}_{(p)}$, then $\sigma(R, \Delta, f^{\lambda}) = \tau(R, \Delta, f^{\lambda - \eps})$ for all $0 < \eps \ll 1$. If the denominator of $\lambda$ is divisible by $p$, then $\sigma(R, \Delta, f^{\lambda}) = \tau(R, \Delta, f^{\lambda + \eps})$ for all $0 < \eps \ll 1$.}
\item{Assume that the non-$F$-regular locus of $R$ is contained in $(f)$. If the denominator of $\lambda$ is divisible by $p$, then $\sigma(R, \Delta, f^{\lambda}) = \sigma(R, \Delta, f^{\lambda + \eps})$ for all $0 < \eps \ll 1$.}
\item{For $\lambda \geq 1$ and  any ideal $\mathfrak{a} \subseteq R$ we have $\mathfrak{a} \cdot \sigma(R, \Delta, \mathfrak{a}^{\lambda -1}) \subseteq \sigma(R, \Delta, \mathfrak{a})$. If $\mathfrak{a} =(f)$ and the non-$F$-regular locus of $R$ is contained in $(f)$, then equality holds for $\lambda > 1$.}
\end{enumerate}
\end{Prop}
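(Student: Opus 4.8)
The plan is to prove the four items one at a time: (i)--(iii) reduce to the Cartier-module input recalled above (and to \cite{staeblerassgradecartier}, \cite{fujinotakagischwedenonlc}, \cite{blicklep-etestideale}), while (iv) is a direct manipulation of the defining formulas for the $\sigma_n$. For (i), that $\mu\mapsto\sigma(R,\Delta,f^\mu)$ is non-increasing is formal: for $\mu_1\leq\mu_2$ one has $(f^{\lceil\mu_2(p^{ee_0}-1)\rceil})\subseteq(f^{\lceil\mu_1(p^{ee_0}-1)\rceil})$ for every $e$, and since each $\varphi_\Delta^e$ is additive and $R$-linear after the Frobenius twist, an induction on $n$ gives $\sigma_n(R,\Delta,f^{\mu_2})\subseteq\sigma_n(R,\Delta,f^{\mu_1})$, hence the same inclusion for the stable members. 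For discreteness I would first record the characterisation of $\sigma(R,\Delta,f^\mu)$ as the largest ideal $J$ with $J\subseteq\sum_{e\geq1}\varphi_\Delta^e(F_\ast^{ee_0}f^{\lceil\mu(p^{ee_0}-1)\rceil}J)$ --- immediate from the defining descending chain --- and then appeal to the discreteness and rationality of the jumping numbers for such objects from \cite{staeblerassgradecartier} (ultimately resting on Blickle's theory of $p^{-e}$-linear maps). Alternatively one may pass to a finite cover of $\Spec R$ on which $K+\Delta$ and $\divisor f$ become Cartier, use discreteness of $F$-jumping numbers there, and descend.

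For (ii) and (iii), the point is that since the non-$F$-regular locus of $R$ lies in $V(f)$, both $\tau(R,\Delta,f^t)$ and $\sigma(R,\Delta,f^t)$ restrict to the unit ideal on the complement of $V(f)$ for every $t\geq0$ (for $\tau$ by strong $F$-regularity there, for $\sigma$ then by $\tau\subseteq\sigma$); so all the claimed identities may be checked after localising along $V(f)$, where the discreteness from (i) and Skoda-type statements are available. There one compares the ceilings $\lceil\lambda(p^{ee_0}-1)\rceil$ defining $\sigma$ with the ceilings $\lceil t(p^{ee_0}-1)\rceil$ governing $\tau(R,\Delta,f^t)$: if the denominator of $\lambda$ is prime to $p$ then $\lceil\lambda(p^{ee_0}-1)\rceil=\lambda(p^{ee_0}-1)$ along the arithmetic progression of those $e$ for which this product is an integer, and a short argument identifies $\sigma(R,\Delta,f^\lambda)$ with the left limit $\bigcap_{\eps>0}\tau(R,\Delta,f^{\lambda-\eps})$, which stabilises by discreteness of $F$-jumping numbers; if instead $p$ divides the denominator of $\lambda$, the same bookkeeping forces the comparison with $\tau(R,\Delta,f^{\lambda+\eps})$. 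This yields (ii), and (iii) follows because for $\lambda'$ slightly larger than such a $\lambda$ one still has $\tau(R,\Delta,f^{\lambda'+\eps'})=\tau(R,\Delta,f^{\lambda+\eps})$ by right-continuity of test ideals. The combinatorial details are those carried out in \cite{staeblerassgradecartier}.

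For (iv) I would argue directly. Since $p^{ee_0}-1\in\mathbb{Z}$ we have $\lceil\lambda(p^{ee_0}-1)\rceil=b_e+(p^{ee_0}-1)$, where $b_e:=\lceil(\lambda-1)(p^{ee_0}-1)\rceil$; writing $\sigma:=\sigma(R,\Delta,\mathfrak{a}^{\lambda-1})$, stabilisation of its defining chain gives $\sigma=\sum_{e\geq1}\varphi_\Delta^e(F_\ast^{ee_0}\mathfrak{a}^{b_e}\sigma)$. Multiplying by $\mathfrak{a}$ and using the projection formula $\mathfrak{a}\cdot\varphi_\Delta^e(F_\ast^{ee_0}M)=\varphi_\Delta^e(F_\ast^{ee_0}\mathfrak{a}^{[p^{ee_0}]}M)$ together with $\mathfrak{a}^{[p^{ee_0}]}\subseteq\mathfrak{a}^{p^{ee_0}}$, we obtain
\begin{align*}
\mathfrak{a}\cdot\sigma &= \sum_{e\geq1}\varphi_\Delta^e\big(F_\ast^{ee_0}\,\mathfrak{a}^{[p^{ee_0}]}\mathfrak{a}^{b_e}\sigma\big)\subseteq\sum_{e\geq1}\varphi_\Delta^e\big(F_\ast^{ee_0}\,\mathfrak{a}^{b_e+p^{ee_0}}\sigma\big)\\
&=\sum_{e\geq1}\varphi_\Delta^e\big(F_\ast^{ee_0}\,\mathfrak{a}^{\lceil\lambda(p^{ee_0}-1)\rceil}(\mathfrak{a}\cdot\sigma)\big),
\end{align*}
so $J:=\mathfrak{a}\cdot\sigma$ satisfies $J\subseteq\sum_{e\geq1}\varphi_\Delta^e(F_\ast^{ee_0}\mathfrak{a}^{\lceil\lambda(p^{ee_0}-1)\rceil}J)$ and hence $J\subseteq\sigma(R,\Delta,\mathfrak{a}^\lambda)$ by the maximality characterisation used in (i). For the reverse inclusion when $\mathfrak{a}=(f)$, the non-$F$-regular locus lies in $V(f)$, and $\lambda>1$, I would use (ii) to replace $\sigma(R,\Delta,f^{\lambda-1})$ and $\sigma(R,\Delta,f^\lambda)$ by nearby test ideals --- the perturbation being on the same side, since $\lambda$ and $\lambda-1$ have the same denominator --- and then apply Skoda's theorem for a principal ideal, $f\cdot\tau(R,\Delta,f^{t-1})=\tau(R,\Delta,f^t)$ for $t\geq1$.

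I expect the main obstacle to be the discreteness in (i) together with the ceiling analysis behind (ii): identifying $\sigma$ with the correct one-sided limit of test ideals, separately in the two denominator classes, is where the real care is needed, whereas the monotonicity and the inclusion in (iv) are essentially formal once the maximality characterisation of $\sigma$ and the projection formula are in hand.
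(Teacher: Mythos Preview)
Your proposal is correct and matches the paper's approach: the paper's proof is simply a list of citations to \cite{staeblerassgradecartier} for (i)--(iii) and for the inclusion in (iv), together with the remark that the equality in (iv) follows from Skoda for test ideals via (ii). You flesh out these citations with sketches and give the inclusion in (iv) directly via the projection formula and the maximality characterisation of $\sigma$, but the strategy---and in particular the reduction of the equality case in (iv) to Skoda for $\tau$ through (ii)---is exactly the same.
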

\begin{proof}
(i) is \cite[Proposition 4.6]{staeblerassgradecartier}, (ii) is \cite[Proposition 4.9, Corollary 4.16]{staeblerassgradecartier}, (iii) is \cite[Proposition 4.14]{staeblerassgradecartier} and (iv) is \cite[Proposition 4.20]{staeblerassgradecartier}, the addendum follows from the Skoda for test ideals (using (ii)).
\end{proof}

\begin{Le}
\label{SigmaInclusion}
Let $R$ be $F$-finite, $\Delta$ a $\mathbb{Q}$-divisor such that $(p^{e_0} -1)(K_{\Spec R} + \Delta)$ is Cartier, $\mathfrak{a}$ an ideal and $\lambda \geq 0$. If $g \in \mathfrak{a}^m$, then $\sigma(R, \Delta, g^{\frac{\lambda}{m}}) \subseteq \sigma(R, \Delta, \mathfrak{a}^{\lambda})$.
\end{Le}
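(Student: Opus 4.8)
The plan is to reduce the whole statement to a single containment of ideals, namely
\[
(g)^{\lceil \frac{\lambda}{m}(p^{e\cdot e_0}-1)\rceil} \subseteq \mathfrak{a}^{\lceil \lambda(p^{e\cdot e_0}-1)\rceil}
\qquad\text{for every } e \geq 1,
\]
and then to propagate it through the inductive construction of the ideals $\sigma_n$ defining $\sigma(R,\Delta,-)$.

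First I would prove this containment by elementary exponent arithmetic. Since $g \in \mathfrak{a}^m$ we have $g^k \in \mathfrak{a}^{mk}$ for every $k \geq 0$; taking $k = \lceil \frac{\lambda}{m}(p^{e\cdot e_0}-1)\rceil$ gives $(g)^k \subseteq \mathfrak{a}^{mk}$. Moreover $mk = m\lceil \frac{\lambda}{m}(p^{e\cdot e_0}-1)\rceil \geq \lambda(p^{e\cdot e_0}-1)$, and because $mk$ is an integer this forces $mk \geq \lceil \lambda(p^{e\cdot e_0}-1)\rceil$, so $\mathfrak{a}^{mk}\subseteq \mathfrak{a}^{\lceil \lambda(p^{e\cdot e_0}-1)\rceil}$, which gives the displayed inclusion. (For $\lambda = 0$ both ceilings are $0$ and the statement is trivial.)

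Next I would induct on $n$, comparing $\sigma_n(R,\Delta,g^{\lambda/m})$ with $\sigma_n(R,\Delta,\mathfrak{a}^\lambda)$. For $n = 1$, applying the additive operators $\varphi_\Delta^e\circ F_\ast^{e\cdot e_0}(-)$ to the containment above and summing over $e \geq 1$ yields $\sigma_1(R,\Delta,g^{\lambda/m})\subseteq \sigma_1(R,\Delta,\mathfrak{a}^\lambda)$. For the inductive step, assuming $\sigma_{n-1}(R,\Delta,g^{\lambda/m})\subseteq \sigma_{n-1}(R,\Delta,\mathfrak{a}^\lambda)$, I multiply the ideal containment by the respective $\sigma_{n-1}$, apply $\varphi_\Delta^e\circ F_\ast^{e\cdot e_0}(-)$, and sum over $e$, obtaining $\sigma_n(R,\Delta,g^{\lambda/m})\subseteq \sigma_n(R,\Delta,\mathfrak{a}^\lambda)$. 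Since both descending chains stabilize (Blickle's result cited after the definition), passing to $n \gg 0$ gives $\sigma(R,\Delta,g^{\lambda/m})\subseteq \sigma(R,\Delta,\mathfrak{a}^\lambda)$.

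I do not expect a real obstacle here: the argument is purely formal once the ideal containment is established. The only points requiring a little care are that the ceiling inequality goes in the direction $\lceil \lambda(p^{e\cdot e_0}-1)\rceil \leq m\lceil \frac{\lambda}{m}(p^{e\cdot e_0}-1)\rceil$ (and not the reverse), and that the argument is insensitive to whether one works with the plain powers $\mathfrak{a}^k$ or with their integral closures $\overline{\mathfrak{a}^k}$ as in \cite{fujinotakagischwedenonlc}, which it is since $g^k \in \mathfrak{a}^{mk} \subseteq \overline{\mathfrak{a}^{mk}}$.
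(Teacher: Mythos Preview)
Your argument is correct and is essentially the paper's own proof: the key observation is the ceiling inequality $m\lceil \frac{\lambda}{m}(p^{e\cdot e_0}-1)\rceil \geq \lceil \lambda(p^{e\cdot e_0}-1)\rceil$, which gives the containment $(g)^{\lceil \frac{\lambda}{m}(p^{e\cdot e_0}-1)\rceil}\subseteq \mathfrak{a}^{\lceil \lambda(p^{e\cdot e_0}-1)\rceil}$ and hence $\sigma_n(R,\Delta,g^{\lambda/m})\subseteq \sigma_n(R,\Delta,\mathfrak{a}^\lambda)$ for all $n$. Your additional remark about the integral-closure variant is a harmless bonus.
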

\begin{proof}
Observe that $m\lceil \frac{\lambda}{m}(p^{ea} -1) \rceil \geq \lceil \lambda(p^{ea} -1) \rceil$. Thus for $e \geq 1$ we have
\[\varphi_\Delta^e(F_\ast^{e\cdot e_0} (g)^{\lceil\frac{\lambda}{m} (p^{e \cdot e_0} -1)\rceil}) \subseteq \varphi^e_\Delta(F_\ast^{e \cdot e_0} \mathfrak{a}^{\lceil\lambda (p^{e \cdot e_0} -1)\rceil}).\] This shows that $\sigma_n(R, \Delta, g^{\frac{\lambda}{m}}) \subseteq \sigma_n(R, \Delta, \mathfrak{a}^\lambda)$ for all $n$.
\end{proof}

\section{maximal non-lc ideals and non-$F$-pure ideals}
\label{maximalnonlc}

The goal of this section is to prove that the weak ordinarity conjecture (Conjecture \ref{weakordinarity} from the introduction) implies another conjecture that claims that (given any model) the reduction of maximal non-lc ideals coincides with non-$F$-pure ideals on a dense set (Conjecture \ref{con2} below).

We recall the definition of maximal non-lc ideal sheaves and multiplier ideal sheaves. Given an effective divisor $D$ we write ${}^kD = \sum E_i$, where the $E_i$ are the prime divisors which occur with multiplicity exactly $k$ in $D$.

By a \emph{variety} we mean a separated integral scheme of finite type over a field. \emph{Unless explicitly stated to the contrary we will from now on  work over a field $k$ of characteristic zero}.

\begin{Def}
Let $X$ be a normal variety and $\Delta$ an effective $\mathbb{Q}$-divisor such that $K_X + \Delta$ is $\mathbb{Q}$-Cartier and let $\mathfrak{a}$ be an ideal sheaf. Let $f: Y \to X$ be a resolution such that $K_Y + \Delta_Y = f^\ast(K_X + \Delta)$ and $ \mathfrak{a} \mathcal{O}_Y = \mathcal{O}(-E)$ , where $\Supp \Delta_Y \cup \Supp E$ is snc.

Then we define the \emph{maximal non-lc ideal sheaf} (associated to $\Delta$ and $\mathfrak{a}^c$), as \[\mathcal{J}'(X, \Delta, \mathfrak{a}^c) = f_\ast \mathcal{O}_Y(- \lfloor \Delta_Y + c E\rfloor  + \sum_{k=1}^\infty {}^k(\Delta_Y + cE)).\]

Similarly, we define the \emph{multiplier ideal sheaf} as \[ \mathcal{J}(X, \Delta, \mathfrak{a}^c) = f_\ast \mathcal{O}_Y(\lceil - \Delta_Y - cE \rceil).\]
\end{Def}

Given a birational morphism $f: Y \to X$ we denote the strict transform of a divisor $D$ on $X$ by $f_\ast^{-1} D$.

\begin{Def}
Let $X$ be a normal variety and $\Delta$ an effective $\mathbb{Q}$-divisor such that $K_X + \Delta$ is $\mathbb{Q}$-Cartier. Fix a resolution $f: Y \to X$ such that $\Delta_Y$ has snc support, where $K_Y + \Delta_Y = f^\ast(K_X + \Delta)$. Write $K_Y + f_\ast^{-1} \Delta = f^\ast(K_X + \Delta) + \sum_E a_E E$, where the $E$ are distinct exceptional prime divisors, and $\Delta = \sum d_i \Delta_i$, where the $\Delta_i$ are distinct prime divisors.
\begin{enumerate}[(a)]
\item{The pair $(X, \Delta)$ is called \emph{Kawamata log terminal($=$klt)}, if $\lfloor \Delta \rfloor = 0$ and $a_E > -1$. Equivalently, $\mathcal{J}(X, \Delta) = \mathcal{O}_X$.}
\item{The pair $(X, \Delta)$ is called \emph{purely log terminal($=$plt)}, if $a_E > -1$ and $d_i \leq 1$}
\item{The pair $(X, \Delta)$ is called \emph{log canonical}, if $a_E \geq -1$ and $d_i \leq 1$.}
\item{We define the \emph{non-klt locus} of $(X, \Delta)$ as the closed subscheme $V(\mathcal{J}(X, \Delta))$. By abuse of notation we will say that an ideal sheaf $\mathfrak{a}$ \emph{contains the non-klt locus} if $\mathcal{J}(X, \Delta) \subseteq \mathfrak{a}$.}
\item{We define the \emph{non plt-locus} of $(X, \Delta)$ as the closed subscheme $V(f_\ast \mathcal{O}_Y(\lceil - \Delta_Y + f_\ast^{-1} \Delta \rceil))$, that is, we throw away the non-exceptional parts of $\Delta_Y$. By abuse of notation we will say that an ideal sheaf $\mathfrak{a}$ \emph{contains the non-plt locus} if $f_\ast \mathcal{O}_Y(\lceil - \Delta_Y + f_\ast^{-1} \Delta \rceil) \subseteq \mathfrak{a}$.}
\end{enumerate}
\end{Def}

Note that in particular, if $D$ is a prime divisor whose coefficient in $\Delta_Y$ is $\geq 1$, then $\mathcal{O}(-D)$ is contained in the non-klt locus.

\begin{Le}
\label{maximalnonlicforidealseps}
Let $X$ be a normal variety, $\Delta$ an effective $\mathbb{Q}$-divisor such that $K_X + \Delta$ is $\mathbb{Q}$-Cartier, $\mathfrak{a}$ an ideal sheaf and $\lambda \in \mathbb{Q}_{\geq 0}$. If $f: Y \to X$ is a log resolution of $(X, \Delta, \mathfrak{a})$, then for all $0 < \eps \ll 1$ \[\mathcal{J}'(X, \Delta, \mathfrak{a}^\lambda) = f_\ast \mathcal{O}_Y(\lceil - \Delta_Y - \lambda F + \eps(\Delta_Y + \lambda F)\rceil),\] where $f^{-1}(\mathfrak{a}) = \mathcal{O}_Y(-F)$. 
\end{Le}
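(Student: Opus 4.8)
The plan is to reduce the statement to a purely combinatorial identity of $\mathbb{Q}$-divisors on $Y$ and then verify that identity one prime component at a time. Set $D := \Delta_Y + \lambda F$; since $f$ is a log resolution of $(X,\Delta,\mathfrak{a})$, the support $\Supp\Delta_Y \cup \Supp F$ is snc, so $D$ is an snc $\mathbb{Q}$-divisor. Note that $D$ need not be effective, because exceptional divisors with positive discrepancy occur in $\Delta_Y$ with negative coefficient. By definition (applied to this $f$, using that $\mathcal{J}'$ is independent of the chosen log resolution) we have $\mathcal{J}'(X,\Delta,\mathfrak{a}^\lambda) = f_\ast\mathcal{O}_Y\bigl(-\lfloor D\rfloor + \sum_{k=1}^{\infty}{}^kD\bigr)$, whereas the right-hand side of the lemma is $f_\ast\mathcal{O}_Y\bigl(\lceil -(1-\eps)D\rceil\bigr)$. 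Hence it suffices to prove the equality of divisors
\[ -\lfloor D\rfloor + \sum_{k=1}^{\infty}{}^kD \;=\; \lceil -(1-\eps)D\rceil \]
for all sufficiently small $\eps > 0$, and then apply $f_\ast\mathcal{O}_Y(-)$.

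To establish this equality I fix a prime divisor $E$ in $\Supp D$ with coefficient $d \in \mathbb{Q}$ in $D$. The coefficient of $E$ in $\sum_{k=1}^{\infty}{}^kD$ is $1$ if $d$ is a positive integer and $0$ otherwise, so the coefficient of $E$ on the left-hand side is $-d+1$ when $d \in \mathbb{Z}_{>0}$ and $-\lfloor d\rfloor$ in all other cases. On the right-hand side the coefficient is $\lceil -d + \eps d\rceil$, and I compare the two by distinguishing three cases. If $d \in \mathbb{Z}_{>0}$, then for $0 < \eps < 1/d$ the number $-d+\eps d$ lies in $(-d,-d+1)$, so its ceiling is $-d+1$. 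If $d \in \mathbb{Z}_{\leq 0}$, then for $0 < \eps < 1/|d|$ (vacuously if $d=0$) the number $-d+\eps d$ lies in $(-d-1,-d]$, so its ceiling is $-d = -\lfloor d\rfloor$. If $d \notin \mathbb{Z}$, then $-d$ is not an integer, and for $\eps$ small enough that $\eps|d|$ is less than the distance from $-d$ to the nearest integer we get $\lceil -d+\eps d\rceil = \lceil -d\rceil = -\lfloor d\rfloor$. Since $D$ has only finitely many components, choosing $\eps$ below the minimum of the finitely many bounds above makes the divisor identity hold simultaneously on every component, which proves the lemma.

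I do not expect a genuine obstacle here: the argument is elementary arithmetic of $\lfloor\cdot\rfloor$ and $\lceil\cdot\rceil$. The only point that needs attention — rather than real work — is to keep track of the possibly negative coefficients of $\Delta_Y$, so that the three cases above are genuinely exhaustive; this is why I treat $d \in \mathbb{Z}_{\leq 0}$ (not just $d = 0$) and $d \notin \mathbb{Z}$ of arbitrary sign separately.
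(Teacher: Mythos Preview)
Your proof is correct and follows essentially the same approach as the paper: reduce to a componentwise identity and split into the cases where the coefficient of a prime divisor in $\Delta_Y+\lambda F$ is a positive integer, a non-positive integer, or not an integer. The paper's proof is simply a terser statement of the same case analysis (with the sign convention $a_D = -d$), so there is no substantive difference.
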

\begin{proof}
Let $D$ be a prime divisor occurring in $\Delta_Y + \lambda F$. Denote its coefficient in $-\Delta_Y - \lambda F$ by $a_D$. Then if $a_D \notin \mathbb{Z}$ the assertion is clear for $0 < \eps \ll 1$. If $a_D \in \mathbb{Z}$ one distinguishes the cases $a_D \geq 0$ and $a_D < 0$.
\end{proof}

\begin{Prop}
\label{Maximalnonlcmultklt}
Let $X$ be a normal variety, $\Delta$ an effective $\mathbb{Q}$-divisor such that $K_X + \Delta$ is $\mathbb{Q}$-Cartier, $\mathfrak{a}$ an ideal sheaf and $\lambda \in \mathbb{Q}_{> 0}$. Assume that $\mathfrak{a}$ contains the non-klt locus of $(X, \Delta)$. Then \[\mathcal{J}'(X, \Delta, \mathfrak{a}^{\lambda}) = \mathcal{J}(X, \Delta, \mathfrak{a}^{\lambda - \eps}).\]
\end{Prop}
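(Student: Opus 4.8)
The plan is to realize both ideals as direct images of line bundles on one common log resolution and then compare their defining divisors coefficient by coefficient; the hypothesis on $\mathfrak a$ will be used exactly to eliminate the single place where $\mathcal J'(X,\Delta,\mathfrak a^\lambda)$ could a priori be strictly larger than $\mathcal J(X,\Delta,\mathfrak a^{\lambda-\epsilon})$.

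First I would fix a log resolution $f\colon Y\to X$ of $(X,\Delta,\mathfrak a)$, write $f^{-1}(\mathfrak a)=\mathcal O_Y(-F)$ and $K_Y+\Delta_Y=f^\ast(K_X+\Delta)$, and keep a small rational number $\epsilon>0$ in reserve. By Lemma \ref{maximalnonlicforidealseps}, for $0<\epsilon\ll1$ one has $\mathcal J'(X,\Delta,\mathfrak a^\lambda)=f_\ast\mathcal O_Y(D_1)$ with $D_1=\lceil-\Delta_Y-\lambda F+\epsilon(\Delta_Y+\lambda F)\rceil$, while directly from the definition of the multiplier ideal $\mathcal J(X,\Delta,\mathfrak a^{\lambda-\epsilon})=f_\ast\mathcal O_Y(D_2)$ with $D_2=\lceil-\Delta_Y-(\lambda-\epsilon)F\rceil$ (and this is independent of $\epsilon$ once $\epsilon$ is small). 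Hence it suffices to show $f_\ast\mathcal O_Y(D_1)=f_\ast\mathcal O_Y(D_2)$ for all sufficiently small $\epsilon$.

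Next I would compare coefficients on $Y$. For a prime divisor $D$ put $\delta_D=\operatorname{coeff}_D\Delta_Y$, $\phi_D=\operatorname{coeff}_DF\ge0$ and $g_D=\delta_D+\lambda\phi_D$, so that $\operatorname{coeff}_DD_1=\lceil-(1-\epsilon)g_D\rceil$ and $\operatorname{coeff}_DD_2=\lceil-g_D+\epsilon\phi_D\rceil$. A routine estimate, valid once $\epsilon$ is smaller than the distance from each of the finitely many $g_D$ to $\mathbb Z$, shows that these two coefficients agree unless $g_D\in\mathbb Z$, and that then they can differ only in one of the following two ways: \textbf{(a)} $g_D\in\mathbb Z_{\ge1}$ and $\phi_D=0$, in which case $\operatorname{coeff}_DD_1=-g_D+1>-g_D=\operatorname{coeff}_DD_2$; or \textbf{(b)} $g_D\in\mathbb Z_{\le0}$ and $\phi_D>0$, in which case $\operatorname{coeff}_DD_2=-g_D+1>-g_D=\operatorname{coeff}_DD_1$. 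I would also record that on every non-exceptional prime divisor both $D_i$ have non-positive coefficient (since $\Delta_Y+\lambda F$ is effective there), so $f_\ast\mathcal O_Y(D_i)\subseteq\mathcal O_X$ and, for $\varphi\in\mathcal O_X$, the condition $\varphi\in f_\ast\mathcal O_Y(D_i)$ depends only on the negative part of $D_i$.

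Finally the hypothesis enters, precisely to exclude case (a): if $g_D\in\mathbb Z_{\ge1}$ and $\phi_D=0$ then $\delta_D=g_D\ge1$, so $D$ is a non-klt place and its center $f(D)$ lies in the non-klt locus $V(\mathcal J(X,\Delta))$; since by assumption the non-klt locus is contained in $V(\mathfrak a)$, the ideal $\mathfrak a$ vanishes at the generic point of $f(D)$, so $\mathfrak a\mathcal O_Y=\mathcal O_Y(-F)$ vanishes along $D$, forcing $\phi_D\ge1$---a contradiction. Thus $D_1$ and $D_2$ can differ only through case (b); for such $D$ we have $\delta_D=g_D-\lambda\phi_D<0$ (here $\lambda>0$ is used), so $D$ is $f$-exceptional and the two coefficients $-g_D$ and $-g_D+1$ are both $\ge0$. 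Therefore $D_1$ and $D_2$ have the same negative part, and by the last observation of the previous paragraph $f_\ast\mathcal O_Y(D_1)=f_\ast\mathcal O_Y(D_2)$, as desired. I expect the main difficulty to be the coefficient comparison in the third step---identifying precisely when the two ceilings disagree, and checking that a single small $\epsilon$ simultaneously serves Lemma \ref{maximalnonlicforidealseps}, the multiplier ideal $\mathcal J(X,\Delta,\mathfrak a^{\lambda-\epsilon})$ and all the integer-part estimates---together with the concluding structural point that the surviving discrepancy is supported on exceptional divisors with non-negative coefficients and hence is invisible to $f_\ast$.
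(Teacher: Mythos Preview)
Your argument is correct and follows essentially the same route as the paper: fix a common log resolution, compare the two round-up divisors coefficient by coefficient, use the hypothesis on the non-klt locus to rule out the case $\delta_D\ge 1,\ \phi_D=0$, and observe that the only remaining discrepancies are effective exceptional, hence invisible under $f_\ast$. The paper organizes the comparison by first splitting into non-exceptional versus exceptional components and then into ``in $\Supp F$'' versus ``not in $\Supp F$'', whereas you work uniformly with $g_D=\delta_D+\lambda\phi_D$ and isolate the two exceptional integer cases directly; the content is the same.
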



\begin{proof}
Fix a log resolution $f: Y \to X$ of $(X, \Delta, \mathfrak{a})$ and fix a representative of $K_X$. We will assume that $K_Y$ and $f^\ast K_X$ restricted to the regular locus coincide. Let $f^{-1}(\mathfrak{a}) = \mathcal{O}_Y(-F)$. We set $\Delta_Y = f^\ast(K_X + \Delta) - K_Y$. In particular, we have $\Delta_Y = f_\ast^{-1} \Delta - \sum_E a_E E$, where the $E$ are exceptional divisors.

Now we want to argue that \begin{align*}\mathcal{J}(X, \Delta, \mathfrak{a}^{\lambda - \eps} ) \overset{\text{def}}{=}&  f_\ast \mathcal{O}_Y(\lceil -\Delta_Y - (\lambda - \eps) F \rceil)\\\overset{!}{=}& f_\ast \mathcal{O}_Y(\lceil - \Delta_Y + \eps \Delta_Y - \lambda F + \eps \lambda F\rceil) = \mathcal{J}'(X, \Delta, \mathfrak{a}^\lambda),\end{align*} where the last equality is due to Lemma \ref{maximalnonlicforidealseps}.

Write \[A := \lceil -\Delta_Y - (\lambda - \eps)F \rceil\] and \[B := \lceil - \Delta_Y + \eps \Delta_Y - (\lambda -\eps \lambda)F \rceil.\] We write $F_{\mathrm{ne}}$ for the non-exceptional part of $F$. Note that the non-exceptional part of $A$ is negative. First we show that the non-exceptional parts of $A$ and $B$ coincide. If a prime divisor $P$ occuring in the non-exceptional part of $A$ also occurs in $F_{\mathrm{ne}}$, then it is already perturbed by $\eps F_{\mathrm{ne}}$ and the expressions coincide. If $P$ doesn't occur in $F_{\mathrm{ne}}$, then by our assumption the coefficient $a$ of $P$ in $-f_\ast^{-1} \Delta$ satisfies $\lceil a \rceil = 0$. But then also $\lceil a - \eps a \rceil = 0$ which is the corresponding component in $B$.

\begin{claim}
The negative exceptional parts of $A$ and $B$ coincide.
\end{claim}
\begin{proof}[Proof of claim.]
Let $D$ be an exceptional divisor occurring in $A$. If $D$ does not occur in the exceptional part of $F$, then $a_D > -1$ by our assumption on the klt-locus. Hence, the coefficient of $D$ in $A$ is $\lceil a_D \rceil \geq 0$. Likewise the coefficient of $D$ in $B$ is of the form $\lceil (1 - \eps)a_D \rceil \geq 0$.

If $D$ does occur in the exceptional part of $F$, with coefficient $c_D \geq 0$ say, then the coefficient of $D$ in $A$ is of the form
\[\lceil a_D -(\lambda - \eps) c_D\rceil = \lceil a_D - \lambda c_D + \eps c_D \rceil. \] The coefficient of $D$ in $B$ is of the form
\[\lceil(1 -\eps)a_D -(\lambda - \eps \lambda) c_D\rceil = \lceil a_D - \lambda c_D - \eps(a_D - \lambda c_D)\rceil. \]
These two expressions clearly coincide if $a_D - \lambda c_D < 0$. If this is not the case, then $a_D - \lambda c_D \geq 0$ so that $D$ occurs as an effective exceptional divisor.
\end{proof}
It follows that $f_\ast \mathcal{O}_Y(A) = f_\ast \mathcal{O}_Y(B)$.
\end{proof}

\begin{Prop}
\label{Nonlcmultiplieridealrelation}
Let $X$ be a normal variety, $\Delta$ an effective $\mathbb{Q}$-divisor such that $K_X + \Delta$ is $\mathbb{Q}$-Cartier, $\mathfrak{a}$ an ideal sheaf and $\lambda \in \mathbb{Q}_{> 0}$. Assume that $\mathfrak{a}$ contains the non-plt locus of $(X, \Delta)$. Then \[\mathcal{J}'(X, \Delta, \mathfrak{a}^{\lambda}) = \mathcal{J}(X, \Delta, \mathfrak{a}^{\lambda - \eps}\mathcal{O}(-\Delta)^{-\underline{a}\eps}),\] where if $\Delta = \sum_i a_i \Delta_i$ with prime divisors $\Delta_i$ then $\mathcal{O}(-\Delta)^{-\underline{a}\eps}$ is shorthand for $\mathcal{O}(-\Delta_1)^{-a_1 \eps} \cdots \mathcal{O}(-\Delta_n)^{-a_n\eps}$.
\end{Prop}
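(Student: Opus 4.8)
The plan is to put both sides on a common log resolution and then run the argument of Proposition~\ref{Maximalnonlcmultklt}, with the extra factor $\mathcal{O}(-\Delta)^{-\underline a\eps}$ taking over the job that the hypothesis $\lfloor\Delta\rfloor=0$ did there, and the non-plt hypothesis playing the role of the non-klt hypothesis. Fix a log resolution $f\colon Y\to X$ of $(X,\Delta,\mathfrak a,\mathcal{O}_X(-\Delta_1),\dots,\mathcal{O}_X(-\Delta_n))$ and write $f^{-1}(\mathfrak a)=\mathcal{O}_Y(-F)$, $\mathcal{O}_X(-\Delta_i)\mathcal{O}_Y=\mathcal{O}_Y(-G_i)$ and $\Delta_Y=f^\ast(K_X+\Delta)-K_Y=f_\ast^{-1}\Delta-\sum_E a_E E$. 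Since $f_\ast^{-1}\Delta_i$ dominates $\Delta_i$ with ramification index one, we have $G_i=f_\ast^{-1}\Delta_i+G_i'$ with $G_i'$ effective exceptional, so that $\sum_i a_iG_i=f_\ast^{-1}\Delta+E'$ with $E':=\sum_i a_iG_i'$ effective exceptional. By Lemma~\ref{maximalnonlicforidealseps} the left hand side equals $f_\ast\mathcal{O}_Y(B)$ with $B=\lceil-(1-\eps)(\Delta_Y+\lambda F)\rceil$, and by the definition of the multiplier ideal the right hand side equals $f_\ast\mathcal{O}_Y(A)$ with $A=\lceil-\Delta_Y-(\lambda-\eps)F+\eps\sum_i a_iG_i\rceil$, both for all sufficiently small $\eps>0$. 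As in the proof of Proposition~\ref{Maximalnonlcmultklt}, it therefore suffices to show that $A$ and $B$ have the same non-exceptional part and the same negative exceptional part, since $f_\ast\mathcal{O}_Y(-)$ depends only on those.

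For the non-exceptional part, let $P$ be a prime divisor on $X$ with coefficient $d_P$ in $\Delta$ and coefficient $c_P$ in $F$. The coefficient of $f_\ast^{-1}P$ in $A$ is $\lceil-d_P-\lambda c_P+\eps(d_P+c_P)\rceil$ and in $B$ it is $\lceil-d_P-\lambda c_P+\eps(d_P+\lambda c_P)\rceil$. Since $\lambda>0$, the two perturbations $d_P+c_P$ and $d_P+\lambda c_P$ vanish simultaneously, precisely when $P$ lies in neither $\Supp\Delta$ nor $\Supp F$ (in which case $f_\ast^{-1}P$ occurs in neither $A$ nor $B$), and are positive simultaneously; and a sufficiently small positive perturbation of a real number $x$ always yields the ceiling $\lceil x\rceil$ if $x\notin\mathbb Z$ and $x+1$ if $x\in\mathbb Z$. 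So the two coefficients coincide. This is exactly the step that required $\lfloor\Delta\rfloor=0$ in Proposition~\ref{Maximalnonlcmultklt} (a coefficient $d_P=1$, or any integer, would otherwise force a mismatch); the factor $\mathcal{O}(-\Delta)^{-\underline a\eps}$ supplies precisely the missing $\eps d_P$, and this is why no klt, indeed no lc, hypothesis is needed once $\lambda>0$.

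For the exceptional part, let $E$ be an exceptional prime, put $t_E=a_E-\lambda c_E$ with $c_E=\ord_E F$, and let $e_E=\ord_E E'\ge0$. Then the coefficient of $E$ in $A$ is $\lceil t_E+\eps(c_E+e_E)\rceil$ and in $B$ it is $\lceil t_E-\eps t_E\rceil$. If $t_E\ge0$, both are $\ge0$, so $E$ occurs in neither negative exceptional part; if $t_E<0$ and $c_E+e_E>0$, both are small positive perturbations of $t_E$, hence equal. In the remaining case $c_E=e_E=0$, so $t_E=a_E<0$, and the non-plt hypothesis enters just as the non-klt hypothesis did in Proposition~\ref{Maximalnonlcmultklt}: if $a_E\le-1$, then $E$ has negative coefficient $\lceil a_E\rceil$ in $\lceil-\Delta_Y+f_\ast^{-1}\Delta\rceil=\sum_D\lceil a_D\rceil D$, so the generic point of $f(E)$ lies in the non-plt subscheme; since $\mathfrak a$ contains the non-plt locus, $\mathfrak a$ vanishes at that point, and pulling back forces $c_E=\ord_E\mathfrak a\ge1$, a contradiction. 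Hence $a_E\in(-1,0)$ and $\lceil a_E\rceil=0=\lceil(1-\eps)a_E\rceil$, so again the coefficients agree. Combining the two parts gives $f_\ast\mathcal{O}_Y(A)=f_\ast\mathcal{O}_Y(B)$.

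The one genuinely new point relative to Proposition~\ref{Maximalnonlcmultklt}, and the place where care is needed, is this translation of the hypothesis into the statement that every exceptional divisor of discrepancy $\le-1$ occurs in $F$; there one must argue about the order of vanishing along $E$ of the pullback of $\mathfrak a$. The rest is the divisor-by-divisor bookkeeping above, parallel to the earlier proposition.
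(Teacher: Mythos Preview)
Your proof is correct and follows the same route as the paper's own argument: put both sides on a common log resolution, write them as $f_\ast\mathcal{O}_Y(A)$ and $f_\ast\mathcal{O}_Y(B)$, and show that the non-exceptional parts and the negative exceptional parts of $A$ and $B$ agree. The paper in fact only sketches the exceptional part here (``One now argues similarly to Proposition~\ref{Maximalnonlcmultklt}''), whereas you spell out the case split on $t_E$ and, in particular, correctly isolate and handle the one genuinely new ingredient---namely that the non-plt hypothesis forces $c_E\ge 1$ whenever $a_E\le -1$, with the additional bookkeeping for the term $e_E$ coming from $\sum_i a_iG_i$.
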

\begin{proof}
This is similar to Proposition \ref{Maximalnonlcmultklt}. Fix a log resolution $f: Y \to X$ of $(X, \Delta, \mathfrak{a}, \mathcal{O}(-\Delta))$ and fix a representative of $K_X$. We will assume that $K_Y$ and $f^\ast K_X$ restricted to the regular locus coincide. Let $f^{-1}(\mathfrak{a}) = \mathcal{O}_Y(-F)$ and $f^{-1}(\mathcal{O}(-\Delta_i) = \mathcal{O}(-G_i)$.
We set $\Delta_Y = f^\ast(K_X + \Delta) - K_Y$. In particular, we have $\Delta_Y = f_\ast^{-1} \Delta - \sum_E a_E E$, where the $E$ are exceptional divisors.

Now we want to argue that \begin{align*}\mathcal{J}(X, \Delta, \mathfrak{a}^{\lambda - \eps} \mathcal{O}(-\Delta)^{-\underline{a}\eps}) \overset{\text{def}}{=}&  f_\ast \mathcal{O}_Y(\lceil -\Delta_Y - (\lambda - \eps) F  + \eps \sum_i a_i G_i\rceil)\\\overset{!}{=}& f_\ast \mathcal{O}_Y(\lceil - \Delta_Y + \eps \Delta_Y - \lambda F + \eps \lambda F\rceil) = \mathcal{J}'(X, \Delta, \mathfrak{a}^\lambda),\end{align*} where the last equality is due to Lemma \ref{maximalnonlicforidealseps}.

Since $\sum_i a_i G_i \vert_{f^{-1}(U)} = \Delta\vert_{f^{-1}(U)}$, where $U$ is the non-exceptional locus of $f$, we may write $G := \sum_i a_i G_i = f^{-1}_\ast \Delta + \text{Exceptional divisors}$. Since $G$ and $\Delta$ are effective the exceptional part is also effective.

Write \[A := \lceil -\Delta_Y - (\lambda - \eps) F  + \eps \sum_i a_i G_i\rceil\] and note that its non-exceptional part is negative (by choosing $0 < \eps \ll 1$ small enough). Similarly, we write \[B := \lceil - \Delta_Y + \eps \Delta_Y - \lambda F + \eps \lambda F\rceil.\] First we assert that the non-exceptional parts of $A$ and $B$ coincide. Indeed, if we denote the non-exceptional part of $F$ by $F_{\mathrm{ne}}$, then the non-exceptional part of $A$ is given by $\lceil - f_\ast^{-1} \Delta - (\lambda - \eps) F_{\mathrm{ne}} + \eps f_\ast^{-1} \Delta\rceil$ and the non-exceptional part of $B$ is given by $\lceil (\eps -1) f_\ast^{-1} \Delta - \lambda(1 - \eps) F_{\mathrm{ne}}\rceil$ -- clearly these coincide for $0 < \eps \ll 1$.

One now argues similarly to Proposition \ref{Maximalnonlcmultklt} that the negative exceptional parts of $A$ and $B$ coincide.
\end{proof}


\begin{Ko}
\label{nonlcproperties}
With the assumptions of Proposition \ref{Nonlcmultiplieridealrelation} the filtration $\mathcal{J}'(X, \Delta, \mathfrak{a}^t)$ for $t > 0$ is discrete, left-continuous and if $m$ is the minimal number of generators of $\mathfrak{a}$, then for $t > m$ we have $\mathcal{J}'(X, \Delta, \mathfrak{a}^t) = \mathfrak{a} \,\,\mathcal{J'}(X, \Delta, \mathfrak{a}^{t-1})$.
\end{Ko}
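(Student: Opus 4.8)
The plan is to transfer each of the three claimed properties of the filtration $t \mapsto \mathcal{J}'(X,\Delta,\mathfrak{a}^t)$ to a corresponding well-known property of the multiplier ideal filtration via the identification in Proposition \ref{Nonlcmultiplieridealrelation}. Fix a log resolution $f\colon Y\to X$ of $(X,\Delta,\mathfrak{a},\mathcal{O}(-\Delta))$ as in that proof, with $f^{-1}(\mathfrak{a})=\mathcal{O}_Y(-F)$ and $\Delta_Y=f^\ast(K_X+\Delta)-K_Y$. By Lemma \ref{maximalnonlicforidealseps} we have, for each $t>0$,
\[
\mathcal{J}'(X,\Delta,\mathfrak{a}^t)=f_\ast\mathcal{O}_Y\bigl(\lceil -\Delta_Y - tF + \eps(\Delta_Y+tF)\rceil\bigr)
\]
for all $0<\eps\ll 1$ (the admissible range of $\eps$ depending on $t$). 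Since $Y$ is fixed, the right-hand side is a function of the $\mathbb{R}$-divisor $\Delta_Y+tF$ alone; as $t$ varies over a bounded interval the finitely many coefficients of $\Delta_Y+tF$ vary linearly, so the round-up is eventually constant on each side of any given value and jumps only at the rationals where some coefficient crosses an integer. This already gives discreteness.

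For left-continuity, I would argue directly on $Y$: as $t\uparrow t_0$, for $t$ slightly below $t_0$ and then $\eps$ chosen small relative to $t_0-t$, the divisor $\lceil -\Delta_Y-tF+\eps(\Delta_Y+tF)\rceil$ agrees with $\lceil -\Delta_Y-t_0F+\eps'(\Delta_Y+t_0F)\rceil$ for a suitable $\eps'>0$, because the perturbation term $\eps(\Delta_Y+tF)$ can absorb the difference $(t_0-t)F$ in the ceiling without crossing any integer; hence the pushed-forward sheaves coincide and $\mathcal{J}'(X,\Delta,\mathfrak{a}^{t})=\mathcal{J}'(X,\Delta,\mathfrak{a}^{t_0})$ for $t<t_0$ sufficiently close, which is precisely left-continuity of the filtration. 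Alternatively one may invoke Proposition \ref{Nonlcmultiplieridealrelation} together with left-continuity (in the sense of Skoda) of multiplier ideals $\mathcal{J}(X,\Delta,\mathfrak{a}^{s}\mathcal{O}(-\Delta)^{-\underline a\eps})$ in the exponent $s$.

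For the Skoda-type statement, I would use Proposition \ref{Nonlcmultiplieridealrelation}: for $t>0$ we have $\mathcal{J}'(X,\Delta,\mathfrak{a}^t)=\mathcal{J}(X,\Delta,\mathfrak{a}^{t-\eps}\mathcal{O}(-\Delta)^{-\underline a\eps})$ for $0<\eps\ll1$, and if $m$ is the minimal number of generators of $\mathfrak{a}$, then the classical Skoda theorem for multiplier ideals gives $\mathcal{J}(X,\Delta,\mathfrak{a}^{s}\mathfrak{b})=\mathfrak{a}\cdot\mathcal{J}(X,\Delta,\mathfrak{a}^{s-1}\mathfrak{b})$ whenever $s-1\geq m-1$, i.e.\ $s\geq m$, uniformly in the auxiliary ideal $\mathfrak{b}=\mathcal{O}(-\Delta)^{-\underline a\eps}$. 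Taking $s=t-\eps$ with $t>m$ and $\eps$ small enough that $t-\eps\geq m$ still holds, and noting that the same $\eps$ computes both $\mathcal{J}'(X,\Delta,\mathfrak{a}^t)$ and $\mathcal{J}'(X,\Delta,\mathfrak{a}^{t-1})$, we obtain $\mathcal{J}'(X,\Delta,\mathfrak{a}^t)=\mathfrak{a}\cdot\mathcal{J}'(X,\Delta,\mathfrak{a}^{t-1})$.

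The main obstacle is bookkeeping with the perturbation parameter $\eps$: each application of Lemma \ref{maximalnonlicforidealseps} or Proposition \ref{Nonlcmultiplieridealrelation} is valid only for $\eps$ below a threshold that depends on the exponent, so one must check that a single $\eps$ can be chosen simultaneously valid for the finitely many exponents appearing in each argument (e.g.\ $t$ and $t-1$ in the Skoda step, or $t$ and a sequence $t_i\uparrow t_0$ in the left-continuity step), and that the rounding behavior is genuinely governed by whether the relevant divisor coefficient is an integer. Once one is careful that all thresholds are strict inequalities among finitely many rational numbers, this is routine.
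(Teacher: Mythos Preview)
Your proposal is correct and follows essentially the same approach as the paper: the paper's proof simply invokes Proposition \ref{Nonlcmultiplieridealrelation} to identify $\mathcal{J}'(X,\Delta,\mathfrak{a}^t)$ with a multiplier ideal and then cites the corresponding standard properties (discreteness and left/right continuity from \cite[Section 3.1]{mustatasrinivasordinary}, Skoda from \cite[Corollary 1.4]{MR2492466}). Your write-up unpacks these references with a direct argument on a fixed log resolution for discreteness and left-continuity, but this is exactly how those cited results are proved, and your Skoda step is precisely the reduction the paper has in mind; the bookkeeping with the $\eps$-threshold that you flag is indeed routine since only finitely many rational thresholds on a fixed resolution are involved.
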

\begin{proof}
These properties follow from the corresponding properties for multiplier ideals (see \cite[Section 3.1]{mustatasrinivasordinary} for the first two properties and \cite[Corollay 1.4]{MR2492466} for the last one). 
\end{proof}

\begin{Le}
Let $X$ be a normal variety and $\Delta$ an effective $\mathbb{Q}$-divisor such that $K_X + \Delta$ is $\mathbb{Q}$-Cartier. If $D$ is an effective Cartier divisor then $\mathcal{J}'(X, \Delta, \lambda D) = \mathcal{O}(-D)\cdot \mathcal{J}'(X, \Delta, (\lambda - 1) D)$ for all $\lambda > 1$.
\end{Le}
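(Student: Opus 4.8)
The plan is to localize and then reduce the identity to a divisor-coefficient count on a fixed log resolution, using Lemma~\ref{maximalnonlicforidealseps} to express both maximal non-lc ideals as pushforwards of round-up divisors. Since the statement is sheaf-theoretic, I would work on an affine open $U \subseteq X$ on which $D = \divisor_U(g)$ for some $g \in \mathcal{O}(U)$, so that $\mathcal{O}_X(-D)|_U = (g)$. Fix a log resolution $f \colon Y \to X$ of $(X, \Delta, \mathcal{O}_X(-D))$, write $K_Y + \Delta_Y = f^\ast(K_X + \Delta)$, and note $f^{-1}(\mathcal{O}_X(-D))\,\mathcal{O}_Y = \mathcal{O}_Y(-f^\ast D)$ since $D$ is Cartier. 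By Lemma~\ref{maximalnonlicforidealseps}, for $0 < \eps \ll 1$ we have $\mathcal{J}'(X, \Delta, \lambda D) = f_\ast \mathcal{O}_Y(A)$ and $\mathcal{J}'(X, \Delta, (\lambda-1) D) = f_\ast \mathcal{O}_Y(B_0)$ with $A = \lceil -(1-\eps)(\Delta_Y + \lambda f^\ast D)\rceil$ and $B_0 = \lceil -(1-\eps)(\Delta_Y + (\lambda-1) f^\ast D)\rceil$; by the projection formula $\mathcal{O}_X(-D)\cdot \mathcal{J}'(X, \Delta, (\lambda-1)D) = f_\ast \mathcal{O}_Y(B_0 - f^\ast D)$.

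For the inclusion ``$\supseteq$'' I would argue that $B_0 - f^\ast D \le A$ coefficientwise: for each prime divisor $P$ of $Y$ this amounts to $\lceil -(1-\eps)(c_P - b_P)\rceil \le \lceil -(1-\eps)c_P\rceil + b_P$, where $c_P, b_P$ are the coefficients of $P$ in $\Delta_Y + \lambda f^\ast D$ and in $f^\ast D$, and this follows from monotonicity of $\lceil \,\cdot\, \rceil$ together with $\lceil y+n\rceil = \lceil y\rceil+n$ for $n \in \mathbb{Z}$. Then $f_\ast\mathcal{O}_Y(B_0 - f^\ast D) \subseteq f_\ast\mathcal{O}_Y(A)$; note this direction does not use $\lambda > 1$.

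The content is in ``$\subseteq$''. Let $\phi$ be a local section of $\mathcal{J}'(X, \Delta, \lambda D)$, so $\divisor_Y(\phi) + A \ge 0$. First I would show $g \mid \phi$ in $\mathcal{O}_X$: if $D_i$ is a component of $D$ of multiplicity $m_i \ge 1$, its strict transform $\widetilde{D_i}$ is a non-exceptional prime of $Y$ with coefficient $m_i$ in $f^\ast D$ and coefficient $d_i \ge 0$ in $\Delta_Y$, so its coefficient in $A$ is $\lceil -(1-\eps)(d_i + \lambda m_i)\rceil$, which is $\le -m_i$ once $\eps < (\lambda-1)/\lambda$ --- this is the only place the hypothesis $\lambda > 1$ enters. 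Hence $\ord_{D_i}(\phi) = \ord_{\widetilde{D_i}}(\phi) \ge m_i = \ord_{D_i}(g)$ for every component of $D$, and since $\phi$ is regular and $g$ is a unit off $D$, we get $\psi := \phi/g \in \mathcal{O}_X$. Next I would verify $\psi \in \mathcal{J}'(X, \Delta, (\lambda-1)D) = f_\ast \mathcal{O}_Y(B_0)$, i.e.\ $\divisor_Y(\psi) + B_0 \ge 0$. Here I would use that $\psi$ is regular, so $\divisor_Y(\psi) \ge 0$, and that $\divisor_Y(\phi) = f^\ast D + \divisor_Y(\psi)$, which with $\divisor_Y(\phi) + A \ge 0$ gives $\divisor_Y(\psi) \ge -A - f^\ast D$. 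Writing $A_P, (B_0)_P, b_P$ for the coefficients of a prime $P$, it then suffices to check $\max\{0,\,-A_P - b_P\} \ge -(B_0)_P$ for all $P$. Always $(B_0)_P \le A_P + b_P$; and whenever this is strict, the behaviour of round-up forces $c_P$ to be a positive integer with $c_P \le b_P$, in which case $(B_0)_P = b_P - c_P \ge 0$. Either way the inequality holds, so $\divisor_Y(\psi) + B_0 \ge 0$ and $\phi = g\psi \in \mathcal{O}_X(-D)\cdot \mathcal{J}'(X, \Delta, (\lambda-1)D)$.

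The step I expect to be the main obstacle --- and the reason one cannot just invoke the projection formula --- is that $A$ and $B_0 - f^\ast D$ genuinely differ, namely along $f$-exceptional divisors of positive discrepancy that occur in $f^\ast D$; this already happens for $X = \mathbb{A}^2$, $\Delta = 0$, $D$ a line through the origin and $f$ the blow-up of the origin. Their pushforwards nonetheless agree, and the divisibility $g \mid \phi$ is exactly the extra information needed to absorb that discrepancy; making the interplay of $\divisor_Y(\psi) \ge 0$ with $\divisor_Y(\psi) \ge -A - f^\ast D$ come out right on the offending exceptional divisors is the delicate point.
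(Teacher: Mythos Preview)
Your proof is correct, but the paper's argument is shorter. Since $D$ is Cartier, $f^\ast D$ is an integral divisor, so one can pull it out of the round-up exactly,
\[
A \;=\; \lceil -\Delta_Y - (\lambda-1)f^\ast D + \eps(\Delta_Y + \lambda f^\ast D)\rceil - f^\ast D,
\]
and then the projection formula gives $f_\ast\mathcal{O}_Y(A) = f_\ast\mathcal{O}_Y(A + f^\ast D) \otimes \mathcal{O}_X(-D)$ in one stroke, with no need for the section-by-section divisibility argument $g \mid \phi$. So your concern that ``one cannot just invoke the projection formula'' because $A \neq B_0 - f^\ast D$ is slightly misplaced: the projection formula is applied to $A$ itself, not to $B_0 - f^\ast D$.

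What remains after the projection formula is to identify $f_\ast\mathcal{O}_Y(A + f^\ast D)$ with $\mathcal{J}'(X, \Delta, (\lambda-1)D) = f_\ast\mathcal{O}_Y(B_0)$, and here the divisors $A + f^\ast D$ and $B_0$ genuinely differ: the perturbation term is $\eps(\Delta_Y + \lambda f^\ast D)$ rather than $\eps(\Delta_Y + (\lambda-1)f^\ast D)$, so Lemma~\ref{maximalnonlicforidealseps} does not literally apply. The paper passes over this identification without comment, but it is exactly where the hypothesis $\lambda > 1$ enters (the identification already fails for $X = \mathbb{A}^1$, $\Delta = 0$, $\lambda = 1$), and your Step~2 coefficient analysis --- that a strict inequality $(B_0)_P < A_P + b_P$ forces $c_P \in \mathbb{Z}$ with $0 < c_P \le b_P$, hence $(B_0)_P = b_P - c_P \ge 0$ --- is precisely what is needed to justify it. In short, the paper's route buys brevity via the projection formula, while your route buys a fully justified argument; each supplies what the other leaves implicit.
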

\begin{proof}
Let $f: Y \to X$ be a log resolution of $(X, \Delta, \mathcal{O}(-D))$.
Since $D$ is Cartier $\mathcal{O}_X(-D)$ is invertible. so by Lemma \ref{maximalnonlicforidealseps} and projection formula \begin{align*}\mathcal{J}'(X, \Delta, \lambda D) &= f_\ast \mathcal{O}_Y(\lceil -\Delta_Y - \lambda f^\ast D + \eps(\Delta_Y + \lambda f^\ast D)\rceil)\\ &= f_\ast \mathcal{O}_Y(\lceil - \Delta_Y - (\lambda -1) f^\ast D +\eps(\Delta_Y + \lambda f^\ast D) \rceil - f^\ast D)\\ &= f_\ast \mathcal{O}_Y(\lceil - \Delta_Y - (\lambda -1) f^\ast D + \eps(\Delta_Y + \lambda f^\ast D) \rceil) \otimes \mathcal{O}_X(-D)\\ &= \mathcal{J}'(X, \Delta, (\lambda-1)D) \cdot \mathcal{O}_X(-D).\end{align*}
\end{proof}

Recall that given a variety $X$ over a field $k$ containing $\mathbb{Q}$ a model of $X$ over a finitely generated $\mathbb{Z}$-algebra $A$ is a vartiety $X_A$ over $\Spec A$ such that $X_A \times k \cong X$.

\begin{Con}
\label{con2}
Let $X$ be a normal variety over a field k of characteristic zero, $\Delta$ an effective $\mathbb{Q}$-divisor such that $K_X + \Delta$ is $\mathbb{Q}$-Cartier and $\mathfrak{a}$ an ideal sheaf in $\mathcal{O}_X$. Given any model of $(X, \Delta, \mathfrak{a})$ over a finitely generated $\mathbb{Z}$-subalgebra $A$ of $k$ there exists a dense set of closed points $S$ of $\Spec A$ such that for all $\lambda \geq 0$ and all $s \in S$ one has
\[ \sigma(X_s, \Delta_s, \mathfrak{a}_s^\lambda) = \mathcal{J}'(X, \Delta, \mathfrak{a}^\lambda)_s. \]
\end{Con}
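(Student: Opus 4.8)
The plan is to deduce the equality from Conjecture~\ref{ConTauJGeneral}, which by \cite{bhattschwedetakagiweakordinarityfsing} is a reformulation of the standing hypothesis. The bridge is that, for $\lambda>0$, each side is an infinitesimal perturbation of a familiar object. On the characteristic-zero side, Proposition~\ref{Maximalnonlcmultklt} gives $\mathcal J'(X,\Delta,\mathfrak a^\lambda)=\mathcal J(X,\Delta,\mathfrak a^{\lambda-\eps})$ for $0<\eps\ll 1$; although it is phrased under the hypothesis ``$\mathfrak a$ contains the non-klt locus'', its proof invokes that hypothesis only through the inequalities $a_E>-1$ and $\lceil a\rceil=0$, which hold automatically once $(X,\Delta)$ is klt, so the same conclusion is available in case~(a) for an arbitrary ideal sheaf $\mathfrak a$ as well as in case~(b). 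On the characteristic-$p$ side, Proposition~\ref{SigmaProperties1}(ii) (for the locally principal $\mathfrak a$ of case~(b), using that the non-$F$-regular locus of $(X_s,\Delta_s)$ lies in $\mathfrak a_s$) or its extension to arbitrary $\mathfrak a$ under strong $F$-regularity (case~(a)) identifies $\sigma(X_s,\Delta_s,\mathfrak a_s^\lambda)$ with $\tau(X_s,\Delta_s,\mathfrak a_s^{\lambda-\eps})$ when the denominator of $\lambda$ is coprime to the residue characteristic, and with $\tau(X_s,\Delta_s,\mathfrak a_s^{\lambda+\eps})$ otherwise.

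First I would apply Conjecture~\ref{ConTauJGeneral} to the triple $(X,\Delta,\mathfrak a)$ to obtain a dense set $S_0$ of closed points of $\Spec A$ with $\tau(X_s,\Delta_s,\mathfrak a_s^\mu)=\mathcal J(X,\Delta,\mathfrak a^\mu)_s$ for all $\mu\ge 0$. Specializing $\mu=0$ gives $\tau(X_s,\Delta_s)=\mathcal J(X,\Delta)_s$, so in case~(a) the pair $(X_s,\Delta_s)$ is strongly $F$-regular (as $\mathcal J(X,\Delta)=\mathcal O_X$), and in case~(b) the non-$F$-regular locus of $(X_s,\Delta_s)$ lies in $\mathfrak a_s$ (as $\mathcal J(X,\Delta)\subseteq\mathfrak a$). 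From a fixed log resolution of $(X,\Delta,\mathfrak a)$ one reads off an integer $N$ bounding the denominators of all jumping numbers of $\mu\mapsto\mathcal J(X,\Delta,\mathfrak a^\mu)$; let $S_1\subseteq S_0$ be the dense subset of points whose residue characteristic divides neither $N$ nor the index of $K_X+\Delta$, so that all characteristic-$p$ invariants in play are defined.

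Now fix $s\in S_1$, write $p$ for its residue characteristic, and fix $\lambda\in\mathbb Q_{>0}$. If $p$ does not divide the denominator of $\lambda$, then for all sufficiently small $\eps>0$
\[ \sigma(X_s,\Delta_s,\mathfrak a_s^\lambda)=\tau(X_s,\Delta_s,\mathfrak a_s^{\lambda-\eps})=\mathcal J(X,\Delta,\mathfrak a^{\lambda-\eps})_s=\mathcal J'(X,\Delta,\mathfrak a^\lambda)_s, \]
combining the two perturbation statements with $s\in S_0$. If $p$ divides the denominator of $\lambda$, then $\lambda$ cannot be a jumping number of $\mathcal J(X,\Delta,\mathfrak a^\bullet)$, since its denominator does not divide $N$; hence that filtration is locally constant at $\lambda$, and the same chain of equalities goes through with $\tau(X_s,\Delta_s,\mathfrak a_s^{\lambda+\eps})$ in the middle, using $\mathcal J(X,\Delta,\mathfrak a^{\lambda+\eps})_s=\mathcal J(X,\Delta,\mathfrak a^{\lambda-\eps})_s$. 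No choice of $\eps$ uniform in $s$ is required. For $\lambda=0$: in case~(a) both sides are the structure sheaf, while in case~(b) one has $\mathcal J'(X,\Delta)=\mathcal O_X$ because $(X,\Delta)$ is log canonical, and $\sigma(X_s,\Delta_s)=\mathcal O_{X_s}$ on a dense set $S_2$ by the pair version of Takagi's theorem \cite{takagifpurelccorrespondence} under Conjecture~\ref{weakordinarity}; this is the only use of log-canonicity, matching the remark following the main theorem. Then $S:=S_1\cap S_2$ is dense --- removing the fibres over finitely many primes does not destroy density of a set of closed points of $\Spec A$ --- and has the required property.

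The step I expect to be the genuine obstacle is the characteristic-$p$ perturbation statement in case~(a): Proposition~\ref{SigmaProperties1}(ii) is only quoted for principal $\mathfrak a$, so one must establish $\sigma(X_s,\Delta_s,\mathfrak a_s^\lambda)=\tau(X_s,\Delta_s,\mathfrak a_s^{\lambda\mp\eps})$ for arbitrary $\mathfrak a$ when $(X_s,\Delta_s)$ is strongly $F$-regular --- either by extracting the general form from \cite{staeblerassgradecartier} or by a direct argument comparing the defining chain of $\sigma$ with the test ideal, using $\tau(X_s,\Delta_s)=\mathcal O_{X_s}$ together with Lemma~\ref{SigmaInclusion} to reduce to principal ideals generated by general elements of powers of $\mathfrak a$. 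The remaining ingredients --- boundedness of the denominators of jumping numbers, local constancy of $\mathcal J(X,\Delta,\mathfrak a^\bullet)$ away from jumping numbers, density of the cofinite set of admissible residue characteristics, and the fact that the proof of Proposition~\ref{Maximalnonlcmultklt} extends to klt pairs with arbitrary $\mathfrak a$ --- are routine.
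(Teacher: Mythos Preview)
Your strategy is the same as the paper's: use Conjecture~\ref{ConTauJGeneral} together with the perturbation identities $\mathcal J'(X,\Delta,\mathfrak a^\lambda)=\mathcal J(X,\Delta,\mathfrak a^{\lambda-\eps})$ (Proposition~\ref{Maximalnonlcmultklt}) and $\sigma(X_s,\Delta_s,f_s^\lambda)=\tau(X_s,\Delta_s,f_s^{\lambda\mp\eps})$ (Proposition~\ref{SigmaProperties1}(ii)), split according to whether $p(s)$ divides the denominator of $\lambda$, and handle $\lambda=0$ via \cite{takagifpurelccorrespondence}. Your treatment of the ``bad'' case $p(s)\mid\text{denom}(\lambda)$ via boundedness of the denominators of jumping numbers is exactly the paper's argument phrased slightly differently.

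The one genuine gap is the point you yourself flag: Proposition~\ref{SigmaProperties1}(ii) is only available for principal $\mathfrak a$, and the hoped-for extension to arbitrary $\mathfrak a$ under strong $F$-regularity is \emph{not} supplied by \cite{staeblerassgradecartier}. The paper does not prove such an extension; instead it carries out precisely the reduction-to-principal you sketch. Concretely: using \cite[Theorem 15.2]{fujinotakagischwedenonlc} only the inclusion $\mathcal J'(X,\Delta,\mathfrak a^\lambda)_s\subseteq\sigma(X_s,\Delta_s,\mathfrak a_s^\lambda)$ needs to be checked; Skoda for $\mathcal J'$ (Corollary~\ref{nonlcproperties}) and for $\sigma$ (Proposition~\ref{SigmaProperties1}(iv)) reduce this to $\lambda\le m$; then for general $g=g_1\cdots g_m\in\mathfrak a^m$ one has $\mathcal J'(X,\Delta,\mathfrak a^\lambda)=\mathcal J'(X,\Delta,g^{\lambda/m})$ via Proposition~\ref{Maximalnonlcmultklt} and \cite[Proposition 3.5]{mustatasrinivasordinary}, and the principal case together with Lemma~\ref{SigmaInclusion} gives
\[
\mathcal J'(X,\Delta,\mathfrak a^\lambda)_s=\mathcal J'(X,\Delta,g^{\lambda/m})_s\subseteq\sigma(X_s,\Delta_s,g_s^{\lambda/m})\subseteq\sigma(X_s,\Delta_s,\mathfrak a_s^\lambda).
\]
So your second suggested route is the correct one, but note that it only yields one inclusion for general $\mathfrak a$; the other comes for free from \cite{fujinotakagischwedenonlc}.

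A minor point: the density of $S_1\cap S_2$ is not justified by ``removing finitely many primes'', since $S_2$ is an independent dense set from Takagi's theorem. The paper observes that both $S_0$ and $S_2$ arise from the \emph{same} dense set of \cite[Theorem 5.10]{mustatasrinivasordinary} by removing closed subsets, whence their intersection is again dense.
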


We have the following

\begin{Theo}
\label{AnotherResult}
Conjecture \ref{weakordinarity} implies Conjecture \ref{con2} under the assumption that $(X, \Delta)$ is klt or if $\mathfrak{a}$ is locally principal, $(X, \Delta)$ is log canonical and the non-klt locus is contained in $(f)$.
\end{Theo}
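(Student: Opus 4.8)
The plan is to deduce Conjecture \ref{con2} (for the given triple and model) from Conjecture \ref{ConTauJGeneral}. First, by \cite[Theorem 5.3]{bhattschwedetakagiweakordinarityfsing} the weak ordinarity conjecture implies Conjecture \ref{ConTauJGeneral}; applying the latter to the single triple $(X,\Delta,\mathfrak a)$ produces a dense set of closed points $S_0\subseteq\Spec A$ with $\tau(X_s,\Delta_s,\mathfrak a_s^{\mu})=\mathcal J(X,\Delta,\mathfrak a^{\mu})_s$ for all $\mu\geq 0$ and all $s\in S_0$. I would then shrink $S_0$ by a dense open subset so that, for $s\in S_0$: the index of $K_X+\Delta$ is prime to $p_s$; in case (a) the pair $(X_s,\Delta_s)$ is strongly $F$-regular (possible since $(X,\Delta)$ is klt, by \cite{harawatanabe}); and in case (b) the non-$F$-regular locus of $X_s$ is contained in $V(\mathfrak a_s)$ (possible since $(X,\Delta)$ is klt away from $V(\mathfrak a)$). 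In either case the hypotheses of Propositions \ref{SigmaProperties1} and \ref{Maximalnonlcmultklt} are then in force.

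The core is a pointwise comparison at a fixed $\lambda>0$. On the characteristic-zero side Proposition \ref{Maximalnonlcmultklt} gives $\mathcal J'(X,\Delta,\mathfrak a^{\lambda})=\mathcal J(X,\Delta,\mathfrak a^{\lambda-\eps})$ for all $0<\eps\ll1$, so $\mathcal J'(X,\Delta,\mathfrak a^{\lambda})_s$ is the stable value of $\mu\mapsto\mathcal J(X,\Delta,\mathfrak a^{\mu})_s=\tau(X_s,\Delta_s,\mathfrak a_s^{\mu})$ as $\mu\uparrow\lambda$. On the characteristic-$p$ side Proposition \ref{SigmaProperties1}(ii) (for the locally principal ideal in case (b); for an arbitrary ideal sheaf on a strongly $F$-regular ring in case (a)) shows that, when $p_s$ does not divide the denominator of $\lambda$, the ideal $\sigma(X_s,\Delta_s,\mathfrak a_s^{\lambda})$ is \emph{also} that stable value, whence $\sigma(X_s,\Delta_s,\mathfrak a_s^{\lambda})=\mathcal J'(X,\Delta,\mathfrak a^{\lambda})_s$. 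When $p_s$ does divide the denominator of $\lambda$ the same propositions identify $\sigma(X_s,\Delta_s,\mathfrak a_s^{\lambda})$ instead with the right limit $\mathcal J(X,\Delta,\mathfrak a^{\lambda})_s$, which still equals $\mathcal J'(X,\Delta,\mathfrak a^{\lambda})_s$ unless $\lambda$ is a jumping number of the multiplier ideal filtration.

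To make this uniform in $\lambda$ I would use that the filtration $\mu\mapsto\mathcal J(X,\Delta,\mathfrak a^{\mu})$ is discrete, is constant near $\mu=0^{+}$, and satisfies Skoda for $\mu>m$ ($m$ the minimal number of generators of $\mathfrak a$) by Corollary \ref{nonlcproperties}; hence its jumping numbers lie in $\{\mu_1,\dots,\mu_r\}+\mathbb Z_{\geq 0}$ for finitely many $\mu_i\in(0,m]$, and in particular have bounded denominators. Letting $N$ be the least common multiple of the denominators of the $\mu_i$ times the index of $K_X+\Delta$, and putting $S:=S_0\cap\{\,s:\ p_s\nmid N\,\}$ — a dense set, the second factor being dense open — the comparison above gives $\sigma(X_s,\Delta_s,\mathfrak a_s^{\lambda})=\mathcal J'(X,\Delta,\mathfrak a^{\lambda})_s$ for every $\lambda>0$ and every $s\in S$ (for $\lambda$ a jumping number this uses $p_s\nmid N$; otherwise the left and right limits coincide anyway). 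For $\lambda=0$ one notes $\mathcal J'(X,\Delta)=\mathcal O_X$ because $(X,\Delta)$ is klt in case (a) and log canonical in case (b), while $\sigma(X_s,\Delta_s,\mathfrak a_s^{t})=\mathcal J'(X,\Delta,\mathfrak a^{t})_s=\mathcal O_{X_s}$ for $0<t\ll1$ by the case just treated, so the monotonicity of $t\mapsto\sigma(X_s,\Delta_s,\mathfrak a_s^{t})$ forces $\sigma(X_s,\Delta_s)=\mathcal O_{X_s}=\mathcal J'(X,\Delta)_s$.

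Two points will require the most care. The first is the characteristic-$p$ input in case (a): Proposition \ref{SigmaProperties1} is stated only for principal ideals, so one must either establish its analogue for an arbitrary ideal sheaf over a strongly $F$-regular ring, or reduce to the principal case — for a general element $f$ of a high power $\mathfrak a^{m'}$ one has $\mathcal J(X,\Delta,\mathfrak a^{c})=\mathcal J(X,\Delta,(f)^{c/m'})$ (a Bertini-type property of multiplier ideals), hence the same identity for $\mathcal J'$ by Proposition \ref{Maximalnonlcmultklt}, and Lemma \ref{SigmaInclusion} then yields one of the two inclusions for $\sigma$ while the other is the containment $\sigma(X_s,\Delta_s,\mathfrak a_s^{c})\subseteq\mathcal J'(X,\Delta,\mathfrak a^{c})_s$ that holds on a dense open set independently of the conjecture. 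The second, and really the crux, is the uniformity: the directions of the two perturbations ($\lambda-\eps$ on the multiplier side, but $\lambda\pm\eps$ on the $\sigma$ side according to the $p$-adic nature of $\lambda$) must be matched over one dense set, and it is precisely this that forces the reduction to finitely many jumping numbers and the coprimality condition $p_s\nmid N$.
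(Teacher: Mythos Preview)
Your approach is essentially the paper's: invoke Conjecture~\ref{ConTauJGeneral} to identify $\tau$ with $\mathcal{J}_s$ on a dense set, use Proposition~\ref{Maximalnonlcmultklt} together with Proposition~\ref{SigmaProperties1}(ii) for $\lambda$ with denominator prime to $p_s$, and handle the remaining $\lambda$ via bounded-denominator jumping numbers and Proposition~\ref{SigmaProperties1}(iii). Your first ``point of care'' --- the reduction of case (a) to a principal ideal via a general element, combined with Lemma~\ref{SigmaInclusion} and the unconditional inclusion $\sigma\subseteq\mathcal{J}'_s$ of \cite[Theorem~15.2]{fujinotakagischwedenonlc} --- is exactly how the paper proceeds (it cites \cite[Proposition~3.5]{mustatasrinivasordinary} for the Bertini-type statement and does the reduction up front rather than as an afterthought).

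There is, however, a genuine gap in your treatment of $\lambda=0$ in case (b). You assert $\mathcal{J}'(X,\Delta,\mathfrak{a}^{t})_s=\mathcal{O}_{X_s}$ for $0<t\ll 1$, but Proposition~\ref{Maximalnonlcmultklt} gives $\mathcal{J}'(X,\Delta,\mathfrak{a}^{t})=\mathcal{J}(X,\Delta,\mathfrak{a}^{t-\eps})$, and as $t\to 0^{+}$ this stabilizes to $\mathcal{J}(X,\Delta)$, which is a \emph{proper} ideal whenever $(X,\Delta)$ is log canonical but not klt. Monotonicity therefore yields only $\sigma(X_s,\Delta_s)\supseteq\mathcal{J}(X,\Delta)_s$, not $\sigma(X_s,\Delta_s)=\mathcal{O}_{X_s}$. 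The paper fills this gap by invoking \cite[Theorem~2.11]{takagifpurelccorrespondence}, which says that weak ordinarity forces $(X_s,\Delta_s)$ to be $F$-pure (i.e.\ $\sigma(X_s,\Delta_s)=\mathcal{O}_{X_s}$) on a dense set constructed compatibly with $S$; this is a separate, non-trivial input. Your monotonicity trick does work in case (a), since there $\mathcal{J}(X,\Delta)=\mathcal{O}_X$; and indeed the paper observes after the theorem that the log canonical hypothesis in (b) is needed solely for $\lambda=0$.
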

\begin{proof}
Since the formation of multiplier ideals commutes with field base change and since reduction to positive characteristic does not differentiate between $X$ and $X \times_k \bar{k}$ we may assume that $k$ is algebraically closed.

In proving Conjecture \ref{con2} we may look at a finite open affine cover $U_i$ of $X$ and show \[\sigma(U_{i,s}, {\Delta\vert_{U_i}}_s, {\mathfrak{a}\vert_{U_i}}_s^\lambda) = \mathcal{J}'(U_{i}, {\Delta\vert_{U_i}}, {\mathfrak{a}\vert_{U_i}}^\lambda)_s \] for all $i$ simultaneously. Hence, we may assume that $X$ is affine. Since Conjecture \ref{weakordinarity} implies Conjecture \ref{ConTauJGeneral} we have a dense set $S \subset \Spec A$, where \[\mathcal{J}(X, \Delta, \mathfrak{a}^\lambda)_s = \tau(X_s, \Delta_s ,\mathfrak{a}_s^\lambda)\] for all $s \in S$. Note that this set $S$ is obtained from a dense set $S'$ guaranteed by \cite[Theorem 5.10]{mustatasrinivasordinary} by removing a closed subset. Removing a closed set of points in $S$ we may also assume that the index of $\Delta$ is coprime to the characteristics $p(s)$ of the residue fields $k(s)$.

By \cite[Theorem 15.2]{fujinotakagischwedenonlc} it suffices to show the inclusion \begin{equation}\label{eq.NeedtoShow} \mathcal{J}'(X, \Delta, \mathfrak{a}^\lambda)_s \subseteq \sigma(X_s, \Delta_s, \mathfrak{a}_s^\lambda).\end{equation} In doing this we will first reduce to the case that $\mathfrak{a} =(f)$ is principal. This will be similar to \cite[Proposition 4.3]{mustatasrinivasordinary}.

Let $h_1, \ldots, h_m$ be generators of $\mathfrak{a}$. Then for $\lambda > m$ we have
\[\mathcal{J}'(X, \Delta, \mathfrak{a}^\lambda)_s = \mathfrak{a}_s \cdot \mathcal{J}'(X, \Delta, \mathfrak{a}^{\lambda -1})_s\] by Corollary \ref{nonlcproperties} and similarly, by Proposition \ref{SigmaProperties1} (iv), we have for $\lambda \geq 1$
\[\mathfrak{a}_s \cdot \sigma(X_s, \Delta_s, \mathfrak{a}_s^{\lambda -1}) \subseteq \sigma(X_s, \Delta_s, \mathfrak{a}_s^\lambda) \]
Hence it suffices to show (\ref{eq.NeedtoShow}) for $\lambda \leq m$.

Let $g_1, \ldots, g_m$ be general linear combinations of the $h_i$ with coefficients in $k$, and let $g = g_1 \cdots g_m$. Then applying Proposition \ref{Maximalnonlcmultklt} (first and last equality) and \cite[Proposition 3.5]{mustatasrinivasordinary} we have
\[\mathcal{J}'(X, \Delta, \mathfrak{a}^\lambda) = \mathcal{J}(X, \Delta, \mathfrak{a}^{\lambda - \eps}) = \mathcal{J}(X, \Delta, g^{\frac{\lambda - \eps}{m}}) = \mathcal{J}'(X, \Delta, g^{\frac{\lambda}{m}})\] for $\lambda \leq m$. 
Assuming that (\ref{eq.NeedtoShow}) holds for principal ideals we then get for any $s$
\[ \mathcal{J}'(X, \Delta, \mathfrak{a}^\lambda)_s = \mathcal{J}'(X, \Delta, g^{\frac{\lambda}{m}})_s  \subseteq \sigma(X_s, \Delta_s, g_s^{\frac{\lambda}{m}}) \subseteq \sigma(X_s, \Delta_s, \mathfrak{a}_s^\lambda)\] where the last inclusion is due to Lemma \ref{SigmaInclusion}.

Having reduced to the case that $\mathfrak{a} = (f)$ we note that if $(X, \Delta)$ is klt, then the non-klt locus is in particular contained in $\mathfrak{a}$. Thus we just need to prove the second case.

For $\lambda = 0$ we use \cite[Theorem 2.11]{takagifpurelccorrespondence} which asserts that Conjecture \ref{weakordinarity} implies that $(X_s, \Delta_s)$ is $F$-pure for all $t \in T$, where $T$ is a dense set of closed points (note that this set $T$ is be constructed from \cite[Theorem 5.10]{mustatasrinivasordinary} by removing a closed subset. Thus we can arrange this to hold for our fixed dense set $S$ by removing a closed subset).
For $\lambda > 0 $ note that by \cite[Theorem 3.2]{takagimultiplieridealsviatightclosure}, for almost all $s$ the non $F$-regular locus of $X_s$ is contained in $(f_s)$.

Let $s \in S$. If the denominator of $\lambda$ is not divisible by $p(s)$, then by Proposition \ref{SigmaProperties1} (ii) for all $0 < \eps \ll 1$
\[\sigma(X_s, \Delta_s, f_s^\lambda) = \tau(X_s, \Delta_s, f_s^{\lambda - \eps}) \] and similarly by Proposition \ref{Maximalnonlcmultklt}
\[\mathcal{J}'(X, \Delta, f^\lambda)_s = \mathcal{J}(X, \Delta, f^{\lambda - \eps})_s.\] Since Conjecture \ref{weakordinarity} implies Conjecture \ref{ConTauJGeneral} we obtain $\sigma(X_s, \Delta_s, f_s^\lambda) = \mathcal{J}'(X, \Delta, f^\lambda)_s$ in this case.

Fix $0 < \eps \ll 1$. For all $0 < \delta \leq \eps$ there are only finitely many primes dividing denominators of $\lambda$ for which \[\mathcal{J}'(X, \Delta, f^{\lambda}) \neq \mathcal{J}'(X, \Delta, f^{\lambda + \delta}).\]
After removing a closed subset from $S$ we may assume that these primes do not occur as residue characteristics for points in $S$. Assume now that the denominator of $\lambda$ is divisible by $p(s)$ for some $s \in S$. Then we have
\[\mathcal{J}'(X, \Delta, f^{\lambda})_s = \mathcal{J}'(X, \Delta, f^{\lambda + \delta})_s, \] where we may choose $0 < \delta \ll \eps$ in such a way that the denominator of $\lambda + \delta$ is not divisible by $p(s)$. Likewise, using Proposition \ref{SigmaProperties1} (iii) we have $\sigma(X_s, \Delta_s, f_s^{\lambda}) = \sigma(X_s, \Delta_s, f_s^{\lambda + \delta})$. Putting these together we obtain
\[\mathcal{J}'(X, \Delta, f^{\lambda})_s = \mathcal{J}'(X, \Delta, f^{\lambda + \delta})_s = \sigma(X_s, \Delta_s, f_s^{\lambda + \delta}) \subseteq \sigma(X_s, \Delta_s, f_s^{\lambda}),\] where the last equality is due to the case where the denominator of $\lambda$ is not divisible by $p(s)$ and the inclusion follows since the filtration $\sigma$ is non-increasing. Since the other inclusion always holds by \cite[Theorem 15.2]{fujinotakagischwedenonlc} the proof is complete.
\end{proof}

\begin{Bem}
Our proof actually shows that for arbitrary normal $X$, if the non-klt locus of $(X, \Delta)$ is contained in $\mathfrak{a} = (f)$ then for any $\lambda > 0$ and all $s \in S$ we have an equality \[\mathcal{J}'(X, \Delta, f^\lambda)_s = \tau(X_s, \Delta_s, f_s^\lambda).\] The inclusion $\sigma(X, \Delta) \subseteq \mathcal{J}'(X, \Delta)_s$ (due to \cite[Theorem 15.2]{fujinotakagischwedenonlc}) and \cite[Theorem 2.11]{takagifpurelccorrespondence} further show that $\mathcal{J}'(X, \Delta)_s$ and $\sigma(X, \Delta)$ agree up to radical. But we cannot apply \cite[Theorem 5.10]{mustatasrinivasordinary} to ensure $F$-purity of $\mathcal{J}'(X, \Delta)_s$ in general.
\end{Bem}

\begin{Prop}
\label{ConverseConjecture}
Conjecture \ref{con2} implies Conjecture \ref{weakordinarity}.
\end{Prop}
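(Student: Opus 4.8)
The plan is to reduce, by known results, Conjecture~\ref{weakordinarity} to a test ideal / multiplier ideal comparison and then to derive that comparison from Conjecture~\ref{con2} by the perturbation trick used in the proof of Theorem~\ref{AnotherResult}. By \cite[Theorem 5.3]{bhattschwedetakagiweakordinarityfsing} (building on \cite[Theorem 5.10]{mustatasrinivasordinary}) it is enough to verify the conclusion of Conjecture~\ref{ConTauJGeneral} --- that $\tau(X_s, \Delta_s, \mathfrak{a}_s^{\lambda}) = \mathcal{J}(X, \Delta, \mathfrak{a}^{\lambda})_s$ for all $\lambda \geq 0$ on a dense set of closed points --- for the particular triples that arise in the reduction of \cite{mustatasrinivasordinary} from an $n$-dimensional smooth projective variety $Y$; inspecting that reduction, one may take these triples with $\mathfrak{a} = (f)$ principal, $(X, \Delta)$ klt (for instance $X$ smooth and $\Delta = 0$), and the non-klt locus of $(X, \Delta)$ contained in $V(f)$. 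So fix such a triple $(X, 0, (f))$ together with a model over $A$.

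Apply Conjecture~\ref{con2} to $(X, 0, (f))$ and this model: there is a dense set $S_1 \subseteq \Spec A$ of closed points with $\sigma(X_s, 0, f_s^{\lambda}) = \mathcal{J}'(X, 0, f^{\lambda})_s$ for all $\lambda \geq 0$. Removing a closed subset, pass to a dense $S \subseteq S_1$ over which $X_s$ is smooth and $f_s \neq 0$; then $X_s$ is strongly $F$-regular, so that its non-$F$-regular locus, being empty, is contained in $(f_s)$, and the $F$-jumping numbers of $f_s$ on $X_s$ are discrete and rational with $t \mapsto \tau(X_s, 0, f_s^{t})$ right-continuous. For $\lambda = 0$ both sides of the desired equality are $\mathcal{O}_{X_s}$ (smoothness of $X$ resp.\ strong $F$-regularity of $X_s$). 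For $\lambda = c > 0$ and $s \in S$, choose $0 < \eps \ll 1$ so that the denominator of $c + \eps$ is prime to $p(s)$ and the interval $(c, c + \eps)$ contains neither a jumping number of $f$ on $X$ nor an $F$-jumping number of $f_s$ on $X_s$; this is possible because all these jumping numbers are discrete whereas the rationals with denominator prime to $p(s)$ are dense. Now Proposition~\ref{Maximalnonlcmultklt}, together with discreteness and right-continuity of multiplier ideals, gives $\mathcal{J}'(X, 0, f^{c+\eps}) = \mathcal{J}(X, 0, f^{c})$, and hence $\mathcal{J}'(X, 0, f^{c+\eps})_s = \mathcal{J}(X, 0, f^{c})_s$; and Proposition~\ref{SigmaProperties1}(ii), together with discreteness and right-continuity of test ideals on the smooth variety $X_s$, gives $\sigma(X_s, 0, f_s^{c+\eps}) = \tau(X_s, 0, f_s^{c})$. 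Chaining these equalities through Conjecture~\ref{con2} we obtain
\[ \tau(X_s, 0, f_s^{c}) = \sigma(X_s, 0, f_s^{c+\eps}) = \mathcal{J}'(X, 0, f^{c+\eps})_s = \mathcal{J}(X, 0, f^{c})_s, \]
which is what was needed. (Should the reduction of \cite{mustatasrinivasordinary} instead produce a non-smooth $X$ with $\mathfrak{a} = (f)$ and non-klt locus inside $V(f)$, one proceeds identically, invoking \cite[Theorem 3.2]{takagimultiplieridealsviatightclosure} to put the non-$F$-regular locus of $X_s$ inside $(f_s)$ and treating $\lambda = 0$ as in the proof of Theorem~\ref{AnotherResult}.)

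The one input I would quote without proof is the reduction of \cite{mustatasrinivasordinary}: it is what translates the cohomological assertion of Conjecture~\ref{weakordinarity} --- the bijectivity of Frobenius on $H^n(Y_s, \mathcal{O}_{Y_s})$ --- into a comparison of test and multiplier ideals of a principal ideal on a smooth variety. Granting it, the only substantive step is the choice of the perturbation parameter: for each $c$ one must exhibit a single $\eps$ that is admissible simultaneously for the characteristic-zero filtration $\{\mathcal{J}(X, 0, f^{t})\}_t$, for the characteristic-$p$ filtration $\{\tau(X_s, 0, f_s^{t})\}_t$, and for the hypothesis of Proposition~\ref{SigmaProperties1}(ii) (coprimality of the denominator of $c + \eps$ to $p(s)$). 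I expect this bookkeeping to be the main technical point, but it is of the same nature as the $\eps$--$\delta$ argument already carried out in the proof of Theorem~\ref{AnotherResult}.
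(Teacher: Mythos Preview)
Your proof is correct and takes essentially the same route as the paper's: both invoke Musta\c{t}\u{a}'s reduction of Conjecture~\ref{weakordinarity} to a test/multiplier ideal comparison for a principal ideal on affine space, then chain $\mathcal{J}\to\mathcal{J}'\to\sigma\to\tau$ via Proposition~\ref{Maximalnonlcmultklt}, Conjecture~\ref{con2}, and Proposition~\ref{SigmaProperties1}(ii). The one simplification you miss is that Musta\c{t}\u{a}'s reduction only requires the comparison at a \emph{single} fixed exponent $\lambda\in[1-\tfrac{1}{2r},1)$, so the paper simply discards the finitely many primes dividing its denominator rather than carrying out your per-$s$, per-$c$ choice of $\eps$ --- but your extra bookkeeping is harmless.
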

\begin{proof}
This follows along the lines of \cite[Theorem 1.3]{mustataordinary}. Indeed, Musta\c{t}\u{a} shows that for a smooth variety $X$ over a field of characteristic zero the reductions along a model are ordinary on a dense set $S$  if the following is satisfied: For a certain element $h \in R= k[x_0, \ldots, x_{N+1}]$ of degree $= 2r \geq N +1$, where $k$ is a field of characteristic zero and \[\mathcal{J}(\mathbb{A}^{N+1}_k, h^{\lambda}) =\begin{cases} R,&  \text{ if } 0 \leq \lambda < \frac{N+1}{2r}\\
(x_0,\ldots, x_N)^{\lfloor 2\lambda r \rfloor - N},& \text{ if } \frac{N+1}{2r} \leq \lambda < 1. \end{cases}\] one needs to have $(x_0, \ldots, x_N)^{2r - N -1} \subseteq \tau(\mathbb{A}^{N+1}_{k(s)}, h^\mu_s)$ for $\mu = 1 - \frac{1}{p(s)}$.

But $\mathbb{A}^{N+1}_k$ is smooth so that by Proposition \ref{Maximalnonlcmultklt} (or \cite{fujinotakagischwedenonlc}) we obtain \[\mathcal{J}(\mathbb{A}^{N+1}_k, \mathfrak{a}^{\lambda}) = \mathcal{J}'(\mathbb{A}^{N+1}_k, \mathfrak{a}^{\lambda - \eps})\] for any $\lambda > 0$ and any $0 < \eps \ll 1$. Now Conjecture \ref{con2} ensures that \[\mathcal{J}'(\mathbb{A}^{N+1}_k, h^{\lambda - \eps})_s = \sigma(\mathbb{A}^{N+1}_{k(s)}, h_s^{\lambda - \eps})\] for all $\lambda$ and all $s \in S$, where $S$ is a dense set of closed points of any model. Fix any $\lambda$ in $[1- \frac{1}{2r}, 1)$. Then we have \[\tau(\mathbb{A}^{N+1}_{k(s)}, h_s^{\lambda}) = \sigma(\mathbb{A}^{N+1}_{k(s)}, h_s^{\lambda - \eps})\] by Proposition \ref{SigmaProperties1} (ii) for all $s$ such that $p(s)$ does not divide the denominator of $\lambda$. Hence, we obtain the desired result after excluding the closed set of $S$ for which $p(s)$ does divide the denominator.
\end{proof}

\begin{Que}
Does a Skoda theorem hold for  $\mathcal{J}'(X, \Delta, \mathfrak{a}^\lambda)$? That is, if $\mathfrak{a}$ is generated by $m$ elements, is it true that $\mathfrak{a}\,\, \mathcal{J}'(X, \Delta, \mathfrak{a}^{\lambda -1}) = \mathcal{J}'(X,\Delta, \mathfrak{a}^{\lambda})$ for $\lambda > m$?
\end{Que}

Recall that we defined $\sigma(R, \Delta, \mathfrak{a}^\lambda)$ using the $\mathfrak{a}^{\lceil \lambda(p^e -1)\rceil}$ whereas in \cite{fujinotakagischwedenonlc} the non-$F$-pure ideal $\bar{\sigma}(R, \Delta, \mathfrak{a}^\lambda)$ is defined by considering $\overline{\mathfrak{a}^{\lceil \lambda (p^{e} -1)\rceil}}$ instead. We note that Proposition \ref{SigmaProperties1} (iv) also holds for $\bar{\sigma}$. Hence, if $R$ is normal, then Theorem \ref{AnotherResult} also holds for this variant. In particular, assuming weak ordinarity $\sigma(R, \Delta, \mathfrak{a}^\lambda) = \bar{\sigma}(R, \Delta, \mathfrak{a}^\lambda)$ holds in case of a reduction on a dense set provided that $(R, \Delta)$ is klt.

At the same time it is shown in \cite[Remark 2.3 (2)]{takagiwatanabeonfpurethresholds} that $\sigma(R, \mathfrak{a}^{\lambda}) \neq \bar{\sigma}(R, \mathfrak{a}^\lambda)$ for $R = \mathbb{F}_2[x,y,z]$, $\mathfrak{a} = (x^2, y^2,z^2)$, $\lambda = \frac{3}{2}$ (and $\Delta = 0$).

\begin{Que}
\label{q1}
Are there examples where $\sigma(R, \Delta, \mathfrak{a}^{\lambda}) \neq \bar{\sigma}(R, \Delta, \mathfrak{a}^\lambda)$, where the denominator of $\lambda$ is not divisible by $p$? Given a reduction mod $p$-situation are there infinitely many primes where equality does not hold?
\end{Que}

Given the results of \cite{cantonhswbehaviorofsingsatfpt} one should probably expect the answer to the first question to be negative.

\begin{Que}
Does Conjecture \ref{weakordinarity} imply Conjecture \ref{con2} if we assume that $(X, \Delta)$ is plt?
\end{Que}

Note that by \cite[Corollary 5.4]{takagicharpadjointideals} the reduction $(X_s, \Delta_s)$ is purely $F$-regular for almost all primes $s \in S$ (see \cite[Definition 2.1]{harawatanabe} for a definition\footnote{Note that the notion is called \emph{divisorially $F$-regular} there.}). The issue is that in proving a positive characteristic analogue of Proposition \ref{Nonlcmultiplieridealrelation} we need to deal with the even locally non principal ideal $\mathcal{O}_X(-\Delta)$. At present we do not know how to handle this.

\bibliography{bibliothek.bib}
\bibliographystyle{amsalpha}
\end{document}